\newcommand{\red}{{\operatorname{red}}}
\renewcommand{\setminus}{\ \rule[2.5pt]{7pt}{1pt}\ }
\DeclareMathOperator{\Alg}{Alg}
\DeclareMathOperator{\image}{image}
\newcommand{\mm}{\mathfrak{m}}
\newcommand{\Spec}{\operatorname{Spec}}
\newcommand{\PP}{\mathbf{P}}
\newcommand{\LL}{\mathcal{L}}
\newcommand{\Aff}{\mathbf{Aff}}
\newcommand{\Stab}{\operatorname{Stab}}
\newcommand{\alg}{\operatorname{alg}}
\newcommand{\A}{\mathcal{A}}
\newcommand{\GG}{\mathcal{G}}
\newcommand{\NN}{\mathcal{N}}
\newcommand{\UU}{\mathcal{U}}
\newcommand{\VV}{\mathcal{V}}
\newcommand{\OO}{\mathcal{O}}
\newcommand{\Ksep}{K_{\operatorname{sep}}}
\newcommand{\Kalg}{K_{\operatorname{alg}}}
\newcommand{\Z}{\mathbf{Z}}
\newcommand{\Q}{\mathbf{Q}}
\newcommand{\tensor}{\otimes}
\newcommand{\reg}{{\operatorname{reg}}}
\newcommand{\Lie}{\operatorname{Lie}}
\newcommand{\GL}{\operatorname{GL}}
\newcommand{\SL}{\operatorname{SL}}
\newcommand{\G}{\mathbf{G}}
\newcommand{\Gm}{\G_m}
\newcommand{\Gal}{\operatorname{Gal}}
\newcommand{\Ga}{\G_a}
\newcommand{\lie}[1]{\mathfrak{#1}}
\newcommand{\glie}{\lie{g}}
\newcommand{\plie}{\lie{p}}
\newcommand{\nlie}{\lie{n}}
\newcommand{\alie}{\lie{a}}
\newcommand{\blie}{\lie{b}}
\newcommand{\ulie}{\lie{u}}
\newcommand{\ad}{\operatorname{ad}}
\newcommand{\Ad}{\operatorname{Ad}}
\newcommand{\Int}{\operatorname{Int}}
\newtheorem*{theorem}{Theorem}
\newtheorem*{prop}{Proposition}
\newtheorem{theoremalpha}{Theorem}
\newtheorem{stmt}{}[subsection]
\theoremstyle{remark}
\newtheorem*{rem}{Remark}
\newtheorem*{example}{Example}
\newtheorem*{defin}{Definition}
\numberwithin{equation}{subsection}
\begin{document}

\author{George J. McNinch}
\address{Department of Mathematics,
         Tufts University,
         503 Boston Avenue,
         Medford, MA 02155,
         USA}
\email{george.mcninch@tufts.edu, mcninchg@member.ams.org}
\thanks{\noindent Research of McNinch supported
  in part by the US NSA award H98230-08-1-0110.}
\author{Donna M. Testerman}
\address{Institut de g\'eom\'etrie, alg\`ebre et topologie,
  B\^atiment BCH,
  \'Ecole Polytechnique F\'ed\'erale de Lausanne,
  CH-1015 Lausanne,
  Switzerland}
\thanks{Research of Testerman supported in part by the Swiss National
  Science Foundation grant PP002-68710.}
\email{donna.testerman@epfl.ch} 
\title{Nilpotent centralizers and Springer isomorphisms}
\date{November 28, 2008}
\thanks{2000 \emph{Mathematics Subject Classification.} 20G15}

\begin{abstract}
  Let $G$ be a semisimple algebraic group over a field $K$ whose
  characteristic is very good for $G$, and let $\sigma$ be any
  $G$-equivariant isomorphism from the nilpotent variety to the
  unipotent variety; the map $\sigma$ is known as a Springer
  isomorphism.  Let $y \in G(K)$, let $Y \in \Lie(G)(K)$, and write
  $C_y = C_G(y)$ and $C_Y= C_G(Y)$ for the centralizers. We show that
  the center of $C_y$ and the center of $C_Y$ are smooth group schemes
  over $K$. The existence of a Springer isomorphism is used to treat
  the crucial cases where $y$ is unipotent and where $Y$ is nilpotent.

  Now suppose $G$ to be quasisplit, and write $C$ for the centralizer
  of a rational \emph{regular} nilpotent element.  We obtain a description of
  the normalizer $N_G(C)$ of $C$, and we show that the automorphism of
  $\Lie(C)$ determined by the differential of $\sigma$ at zero is a
  scalar multiple of the identity; these results verify observations
  of J-P. Serre.
\end{abstract}

\maketitle
\setcounter{tocdepth}{1}
\tableofcontents

\section{Introduction}

Let $G$ be a reductive group over the field $K$ and suppose $G$ to be
$D$-standard; this condition means that $G$ satisfies some
\emph{standard hypotheses} which will be described in
\S\ref{sub:d-standard}. For now, note that a semisimple group $G$ is
$D$-standard if and only if the characteristic of $K$ is \emph{very
  good} for $G$.

Consider the closed subvariety $\NN$ of nilpotent elements of the Lie
algebra $\glie = \Lie(G)$ of $G$, and the closed subvariety $\UU$ of
unipotent elements of $G$. Since $G$ is $D$-standard, one may follow
the argument given by Springer and Steinberg
\cite{springer-steinberg}*{3.12} to find a $G$-equivariant isomorphism
of varieties $\sigma:\NN \to \UU$.  The mapping $\sigma$ is called a
\emph{Springer isomorphism}. There are many such maps: the Springer
isomorphisms can be viewed as the points of an affine variety whose
dimension is equal to the semisimple rank of $G$; see the note of
Serre found in \cite{mcninch-optimal}*{Appendix} which shows that
despite the abundance of such maps, each Springer isomorphism induces
the same bijection between the (finite) sets of $G$-orbits in $\NN$
and in $\UU$.  For some more details, see \S
\ref{sub:springer-iso-exists} below.

Let $y \in G(K)$ and $Y \in \glie(K)$. Since $G$ is $D$-standard, we
observe in \ref{stmt:smooth-centralizers} -- following Springer and
Steinberg \cite{springer-steinberg} -- that the centralizers $C_G(y)$
and $C_G(Y)$ are smooth group schemes over $K$.  The first main
result of this paper is as follows: 
\begin{theoremalpha}
  \label{smoothness-theorem}
  Let $Z_y = Z(C_G(y))$ and $Z_Y = Z(C_G(Y))$ be the centers of the
  centralizers. 
  \begin{itemize}
  \item[(a)] $Z_y$ and $Z_Y$ are smooth group schemes over $K$.
  \item[(b)] $Y \in \Lie(Z_Y)$.
  \end{itemize}
\end{theoremalpha}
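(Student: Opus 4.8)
The plan is to reduce, via the Jordan decomposition, to the cases in which $y$ is unipotent and $Y$ is nilpotent, to identify these two cases by means of a Springer isomorphism, and then to settle the resulting base case using the structure theory of nilpotent centralizers. I will use throughout the identification $\Lie(Z(H)) = \Lie(H)^{\Ad(H)}$ of the Lie algebra of the center of a smooth $K$-group $H$ with the subspace of $\Lie(H)$ fixed by the adjoint action; together with the criterion that a $K$-group scheme of finite type is smooth exactly when its dimension equals the dimension of its Lie algebra, this reduces (a) to the equality $\dim Z(H) = \dim_K \Lie(H)^{\Ad(H)}$.

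First I would make the Jordan reduction. If $y = su$ is the Jordan decomposition of $y \in G(K)$, then $C_G(y) = C_M(u)$ where $M := C_G(s)$; since $G$ is $D$-standard, $M$ is again a $D$-standard group, $s$ is central in $M$, and $u$ is unipotent in $M$, so $Z(C_G(y)) = Z(C_M(u))$. The parallel computation with $Y = S + N \in \glie$ gives $Z(C_G(Y)) = Z(C_M(N))$, where $M = C_G(S)$ is $D$-standard and $N$ is nilpotent. Thus (a) reduces to the smoothness of $Z(C_G(u))$ and $Z(C_G(N))$ for $u$ unipotent, $N$ nilpotent. Choosing a Springer isomorphism $\sigma : \NN \to \UU$ for $M$, its $G$-equivariance gives $\Ad(g)N = N$ if and only if $g\,\sigma(N)\,g^{-1} = \sigma(N)$, whence $C_G(N) = C_G(\sigma(N))$ as group schemes and $Z(C_G(N)) = Z(C_G(\sigma(N)))$. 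As $\sigma$ carries $\NN$ bijectively onto $\UU$, the unipotent and nilpotent cases become the same assertion, and it suffices to prove that $Z(C_G(N))$ is smooth for every nilpotent $N$.

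For the nilpotent base case I would bring in the cocharacter $\lambda : \Gm \to G$ associated to $N$ (available in good characteristic), which satisfies $\Ad(\lambda(t))N = t^2N$ and hence normalizes $C_G(N)$, acting on it and on its center. Using the attendant grading --- with reductive Levi subgroup $M' = C_G(N) \cap C_G(\lambda)$ in weight zero and the unipotent radical of $C_G(N)$ in strictly positive weights --- I would exhibit an explicit smooth central subgroup $S$ of $C_G(N)$, assembled from the connected center of $M'$ together with an appropriate $\lambda$-stable part of the center of the unipotent radical, and then show that $\Lie(S)$ already exhausts $\Lie(C_G(N))^{\Ad(C_G(N))}$. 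The decisive tool is the nondegenerate $G$-invariant bilinear form $\kappa$ on $\glie$ afforded by the very good hypothesis, which identifies $\glie$ with $\glie^{*}$ and lets one compute the dimension of the $\Ad$-invariant subspace, just as in the proof that $C_G(N)$ itself is smooth. Since $S \subseteq Z(C_G(N))$ and $\Lie(S) = \Lie(Z(C_G(N)))$, it would follow that $Z(C_G(N)) = S$ is smooth. I expect the heart of the difficulty to lie exactly here: showing that the scheme-theoretic center acquires no infinitesimal directions beyond those of $S$, i.e. that the conjugation condition defining the center --- a thickened condition in characteristic $p$ --- contributes no extra nilpotents.

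Finally, (b) is immediate from the same identification and needs neither the reduction nor the Springer isomorphism. Because $C_G(Y)$ is smooth, $\Lie(C_G(Y)) = \{X \in \glie : [X,Y] = 0\}$, which contains $Y$ since $[Y,Y] = 0$; and every $g \in C_G(Y)$ satisfies $\Ad(g)Y = Y$ by the very definition of the centralizer, so $Y$ is fixed by the adjoint action. Hence $Y \in \Lie(C_G(Y))^{\Ad(C_G(Y))} = \Lie(Z(C_G(Y))) = \Lie(Z_Y)$, which is assertion (b).
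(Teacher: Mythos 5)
Your reduction steps --- Jordan decomposition to pass to the case of $u$ unipotent / $N$ nilpotent, then a Springer isomorphism to identify those two cases --- follow the paper, and your argument for (b) is essentially the paper's: it is immediate from $\Lie(Z_Y) = \Lie(C_G(Y))^{\Ad(C_G(Y))}$ together with the fact that $C_G(Y)$ fixes $Y$ by definition. One minor gap in the reduction: $M = C_G(s)$ need not be connected, whereas $D$-standard groups are connected by definition; the paper must pass through the exact sequence $1 \to C_{M^o}(u) \to C_M(u) \to E \to 1$ and use that the component group of $M$ has order invertible in $K$ (statements \ref{stmt:diag-centralizer-component-group} and \ref{sub:center-in-disconnected}) before it can invoke the connected nilpotent case; you elide this.

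The fatal problem is the nilpotent base case, which you do not prove: you yourself flag that the ``heart of the difficulty'' --- showing $\Lie(S)$ exhausts $\Lie(C_G(N))^{\Ad(C_G(N))}$ --- is unresolved, and the construction of $S$ you sketch is wrong. The connected center of the Levi factor $M' = C_G(N) \cap C_G(\lambda)$ is in general \emph{not} contained in $Z(C_G(N))$: any torus in $Z(C_G(N))$ must be central in $G$ (this is \ref{stmt:unipotence-of-center}(b) in the paper, proved via the fact that $C^o$ lies in no proper Levi), so for any non-distinguished $N$ your proposed $S$ would leave the center. Nor does the invariant bilinear form compute $\dim \Lie(C)^{\Ad(C)}$: what the very good hypothesis buys for smoothness of $C_G(N)$ itself is separability of orbit maps, which says nothing direct about the fixed points of $\Ad(C)$ acting on $\Lie(C)$. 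The paper's actual mechanism is different and uses the Springer isomorphism a \emph{second} time: one first shows every semisimple element of $\Lie Z$ lies in $\Lie(\zeta_G)$ and every torus of $Z$ lies in $\zeta_G$, giving $\Lie Z = \Lie(\zeta_G) \oplus \ulie$ with $\ulie$ the nilpotent part and $Z_\red = \zeta_G \cdot R_u Z_\red$; then, for $Y \in \ulie$ and $g \in C(K)$, the triviality of $\Ad(g)$ on $\Lie Z$ and $G$-equivariance give $g\sigma(Y)g^{-1} = \sigma(\Ad(g)Y) = \sigma(Y)$, so $\sigma$ carries $\ulie$ into $R_u Z_\red$, whence $\dim \ulie \le \dim R_u Z_\red$ and therefore $\dim \Lie Z \le \dim Z$. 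That dimension bound, obtained without ever computing the $\Ad$-invariants explicitly, is the missing idea in your proposal.
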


See \S\ref{sub:center-as-scheme} for more details regarding the
subgroup schemes $Z_y \subset C_G(y)$ and $Z_Y \subset C_G(Y)$. The
existence of a Springer isomorphism plays a crucial role in the proof of
Theorem \ref{smoothness-theorem}.

Keep the assumptions on $G$, and suppose in addition that $G$ is
\emph{quasisplit} over $K$; under these assumptions, one can find a
$K$-rational regular nilpotent element $X \in \glie(K)$
\cite{mcninch-optimal}*{Theorem 54}. Write $C = C_G(X)$ for the
centralizer of $X$; it is a smooth group scheme over $K$
\ref{stmt:smooth-centralizers}.  

Our next result concerns the normalizer of $C$ in $G$; write $N =
N_G(C)$.
 \begin{theoremalpha}
   \label{N-mod-C-description}
   \begin{enumerate}
   \item[(i)] $N$ is smooth over $K$ and is a solvable group.
   \item[(ii)] If $r$ denotes the semisimple rank of $G$, then $\dim N
     = 2r + \dim \zeta_G$, where $\zeta_G$ denotes the center of $G$.
   \item[(iii)] There is a 1 dimensional torus $S \subset N$ which is
     not central in $G$ such that $S \cdot \zeta_G^o$ is a maximal
     torus of $N$.
   \end{enumerate}
 \end{theoremalpha}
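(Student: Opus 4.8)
The plan is to pin down the geometry of $C$ first, then bound $N$ from both sides and finally locate its maximal torus. Fix a Borel subgroup $B = T\cdot U$ over $K$ with maximal torus $T$ and unipotent radical $U$, chosen so that the regular nilpotent $X$ lies in $\ulie = \Lie(U)$. Recall that $C = C_G(X)$ is smooth and abelian with $\lie{c} := \Lie(C) = \{Z \in \glie : [Z,X]=0\}$, that $\lie{c} = \lie{z} \oplus \lie{c}_{\nil}$ where $\lie{z} = \Lie(\zeta_G^\circ)$ and $\lie{c}_{\nil} = \lie{c}\cap[\glie,\glie]$ is the space of nilpotent elements of $\lie{c}$ (here I use the $D$-standard splitting $\glie = \lie{z}\oplus[\glie,\glie]$), and that $\dim \lie{c} = \rank G = r + \dim\zeta_G$ while $\dim\lie{c}_{\nil} = r$ and $X \in \lie{c}_{\nil}$.

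For (i), I would first show $N \subseteq B$, which yields solvability at once. Since each $n \in N$ normalizes $C$, the map $\Ad(n)$ preserves $\lie{c}$ and hence its set of nilpotent elements $\lie{c}_{\nil} \subseteq \ulie$; thus $\Ad(n)X$ is again a regular nilpotent element lying in $\ulie$. A regular nilpotent lies in a unique Borel subgroup, and $\Ad(n)X \in \ulie \subseteq \blie$ forces that Borel to be $B$; but $\Ad(n)X$ is conjugate to $X$ via $n$, so its unique Borel is also $nBn^{-1}$. Hence $nBn^{-1} = B$, i.e. $n \in N_G(B) = B$.

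For (ii) together with the smoothness in (i), I would compute $\dim N$ by an orbit argument and then match it against a tangent space. The group $N$ acts on $\lie{c}_{\nil}$, and $N\cdot X \subseteq \lie{c}_{\nil}^{\reg}$, the open dense $r$-dimensional set of regular nilpotents in $\lie{c}_{\nil}$. Conversely, for $X' \in \lie{c}_{\nil}^{\reg}$ one has $\lie{c} \subseteq \{Z : [Z,X']=0\}$ (as $\lie{c}$ is abelian) with equality of dimensions, so $C_G(X')^\circ = C^\circ$; writing $X' = \Ad(g)X$ then gives $g \in N_G(C^\circ) = N_G(C) = N$ (the equality of normalizers holding because the component representatives of $C$ lie in $Z(G)$), whence $N\cdot X = \lie{c}_{\nil}^{\reg}$. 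Since $\Stab_N(X) = C$, the orbit map yields $\dim N = \dim C + \dim \lie{c}_{\nil}^{\reg} = 2r + \dim\zeta_G$. On the other hand, the Lie algebra of the scheme-theoretic normalizer is $\lie{n} := \{Z \in \glie : [Z,\lie{c}]\subseteq\lie{c}\}$, and the map $\lie{n}\to\lie{c}_{\nil}$, $Z \mapsto [Z,X]$, has kernel $\lie{c}$ and image inside $\lie{c}_{\nil}$ (since $[\glie,X]\subseteq[\glie,\glie]$). Thus $\dim\lie{n} \le \dim\lie{c} + \dim\lie{c}_{\nil} = 2r + \dim\zeta_G = \dim N$; as $\dim N \le \dim\lie{n}$ always, equality holds and $N$ is smooth.

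For (iii), let $\tau:\Gm\to G$ be the principal cocharacter attached to $X$, normalized by $\langle\alpha,\tau\rangle = 2$ for every simple root $\alpha$; since $G$ is quasisplit this $\tau$ is Galois-stable, hence defined over $K$, and $\Ad(\tau(t))X = t^2 X$. A direct computation shows $\tau(t)$ conjugates $C_G(X)$ to $C_G(t^2X) = C_G(X)$, so $S := \tau(\Gm) \subseteq N$ is a one-dimensional torus, noncentral in $G$ because it scales $X$. To see $S\cdot\zeta_G^\circ$ is maximal, I would determine $T\cap N$: an element $t \in T$ normalizes $C$ exactly when $\Ad(t)X \in K^\times X$, i.e. when $\alpha(t)$ is independent of the simple root $\alpha$; these $r-1$ independent conditions cut out a subtorus of $T$ of dimension $(r+\dim\zeta_G)-(r-1) = 1 + \dim\zeta_G$ containing both $S$ and $\zeta_G^\circ$, and therefore equal to $S\cdot\zeta_G^\circ$. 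Since $N \subseteq B$, every torus of $N$ is conjugate into $T$, so no torus of $N$ exceeds dimension $1 + \dim\zeta_G$; thus $S\cdot\zeta_G^\circ$ is a maximal torus of $N$. I expect the main obstacle to be the bookkeeping in (ii): verifying that the $N$-orbit of $X$ fills $\lie{c}_{\nil}^{\reg}$ (resting on the abelianness of $\lie{c}$ and on $N_G(C) = N_G(C^\circ)$) and the standard but delicate identification of the Lie algebra of the scheme-theoretic normalizer with $\lie{n}$ that converts the tangent-space bound into smoothness.
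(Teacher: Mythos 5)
Your proposal follows the paper's architecture quite closely: solvability via $N \subseteq B$ and the uniqueness of the Borel subgroup whose Lie algebra contains a regular nilpotent (this is exactly \ref{stmt:N-is-solvable}); the dimension count via the orbit identity $\Ad(N)X = \Lie(R_uC)_\reg$ together with $\Stab_N(X) = C$ (this is \ref{stmt:N-orbit-and-dimension} --- the paper conjugates Richardson elements inside $B$ where you use $G$-conjugacy of regular nilpotents and $N_G(C^o) = N_G(C)$, the same idea); and smoothness by squeezing $\dim N \le \dim \Lie(N) \le \dim \lie{n}$, where $\lie{n} = \{Z \in \glie : [Z, \Lie C] \subseteq \Lie C\}$. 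The genuine gap is in your bound for $\dim\lie{n}$. It rests on the claimed ``$D$-standard splitting'' $\glie = \Lie(\zeta_G) \oplus [\glie,\glie]$, and that splitting is false for some $D$-standard groups: for $G = \GL_n$ with $p \mid n$ (which is $D$-standard, cf. \S\ref{sub:d-standard}), $\Lie(\zeta_G) = K \cdot I$ lies \emph{inside} $[\glie,\glie] = \lie{sl}_n$. The failure is fatal, not cosmetic. Take $G = \GL_2$, $p = 2$, $X = E_{12}$, so that $\lie{c} = \Lie(C)$ is spanned by $I$ and $X$: then $[E_{21},X] = E_{22} - E_{11} = I$, and indeed $[Z,X] \in \lie{c}$ for \emph{every} $Z \in \lie{gl}_2$, so $\lie{n} = \lie{gl}_2$ has dimension $4 > 3 = 2r + \dim \zeta_G$; the inequality $\dim\lie{n} \le \dim\lie{c} + \dim\lie{c}_\nil$ you need is simply not available in the stated generality (one can check, further, that every $1 + \epsilon Z$ normalizes $C_{K[\epsilon]}$ here, so the obstruction lies in this case itself, not merely in your bookkeeping). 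The paper's proof of \ref{stmt:N-smooth} diverges from yours at exactly this point: it first reduces to $G$ quasisimple in very good characteristic --- where $\Lie(\zeta_G) = 0$ --- by separable isogenies (\ref{stmt:smooth-normalizer}) and by passing to fixed points of a diagonalizable group scheme, and only then bounds $\lie{n} \subseteq \ker(\ad X)^2$ (using commutativity of $C$) and invokes Springer's theorem that $\dim \ker(\ad X)^2 = 2r$. It is worth noting that in that reduced situation your linear algebra (kernel $= \lie{c}$, image $\subseteq \lie{c}$) gives $\dim\lie{n} \le 2\dim\lie{c} = 2r$ with no outside citation, which is more elementary than the appeal to Springer; what your method cannot do is reach general $D$-standard $G$ directly.

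Two further points. A minor one: in (ii) you pass from the Lie algebra equality $\lie{c} = \lie{c}_\glie(X')$ to $C_G(X')^o = C^o$; in characteristic $p$ distinct smooth connected subgroups can share a Lie algebra, so argue as the paper does --- $C$ is commutative \emph{as a group}, hence $C \subseteq C_G(X')$, and smoothness of centralizers plus the dimension count force $C^o = C_G(X')^o$. A more serious one: your maximality argument in (iii) does not close. A torus $T' \subseteq N$ is indeed conjugate into $T$ by an element of $B$, but that element need not normalize $C$, so the conjugated torus need not lie in $N$, and you obtain only $\dim T' \le \dim T$, not $\dim T' \le 1 + \dim\zeta_G$. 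The paper's \ref{stmt:max-torus-of-N} argues differently: a maximal torus $T'$ of $N$ containing $S$ commutes with $S$ and normalizes $C$, hence stabilizes the weight space $\Lie(C)(\phi;2) = K \cdot X$ --- one-dimensional by \ref{stmt:regular-centralizer}, which is where quasisimplicity of the derived group enters --- so $T' \subseteq \Stab_G([X])$, and Jantzen's lemma identifies $S \cdot \zeta_G^o$ as a maximal torus of that stabilizer. (Your description of $T \cap N$ by the $r-1$ conditions ``$\alpha(t)$ independent of $\alpha$'' likewise uses connectedness of the Dynkin diagram; both your argument and the paper's for (iii) really require the derived group to be quasisimple.)
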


Fix now a cocharacter $\phi$ associated with the nilpotent element $X$;
cf.   \ref{stmt:associated-cochar}.
\begin{theoremalpha}
  \label{N-mod-C-weights}
  Assume that the derived group of $G$ is quasi-simple. Then the Lie
  algebra of $N/C$ decomposes as the direct sum
  \begin{equation*}
    \Lie(N/C) =\Lie(S_0) \oplus
  \bigoplus_{i=2}^r \Lie(N/C)(\phi;2k_i - 2),
  \end{equation*}
  where $k_1 \le k_2 \le k_3 \le \dots \le k_r$ are the \emph{exponents} of the Weyl group
  of $G$, and where $S_0$ is the image of $S$ in $N/C$.
\end{theoremalpha}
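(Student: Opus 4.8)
The plan is to reduce the computation of $\Lie(N/C)$ to the well-understood $\phi$-module structure of the centralizer $\lie{c} = \Lie(C) = C_\glie(X)$, exploiting the fact that $X$ itself lies in $\lie c$. Write $\lie n = \Lie(N)$. Since $\Ad(\phi(t))X = t^2 X$, the torus $\phi(\Gm)$ normalizes $C$, so $\phi(\Gm) \subseteq N$ and $\phi$ acts on $\Lie(N/C)$ by the adjoint action; thus the asserted weight decomposition makes sense. Because $N$ is smooth (Theorem \ref{N-mod-C-description}(i)) and $C$ is smooth, the quotient $N/C$ is smooth and $\Lie(N/C) = \lie n/\lie c$; moreover Theorem \ref{N-mod-C-description}(ii), together with $\dim C = \rank G = r + \dim \zeta_G$ (regularity of $X$), gives $\dim \Lie(N/C) = \dim N - \dim C = r$.

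First I would analyze the map obtained by bracketing with $X$. As $N$ normalizes $C$, the algebra $\lie n$ normalizes $\lie c$; since $X \in \lie c$ we have $[\lie n, X] \subseteq \lie c$, while the kernel of $\ad(X)|_{\lie n}$ is $\lie n \cap C_\glie(X) = \lie c$. Hence $\ad(X)$ induces an injection
\[
\overline{\ad(X)}\colon \lie n/\lie c \hookrightarrow \lie c .
\]
Let $\glie'$ denote the Lie algebra of the derived group; in very good characteristic $\glie = \Lie(\zeta_G) \oplus \glie'$ as Lie algebras, and $\zeta_\glie := \Lie(\zeta_G)$ is exactly the $\phi$-weight-zero part of $\lie c$. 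Since $[\glie, X] \subseteq \glie'$ meets $\zeta_\glie$ trivially, the image of $\overline{\ad(X)}$ lies in $C_{\glie'}(X) = \bigoplus_{j>0} \lie c(\phi; j)$, a space of dimension $r$. Comparing dimensions, $\overline{\ad(X)}$ is an isomorphism $\lie n/\lie c \xrightarrow{\ \sim\ } C_{\glie'}(X)$. Because $X$ has $\phi$-weight $2$, the operator $\ad(X)$ raises $\phi$-weight by $2$, so this isomorphism identifies $\Lie(N/C)(\phi; m)$ with $C_{\glie'}(X)(\phi; m+2)$ for every $m$; in particular all weights on $\Lie(N/C)$ are $\geq 0$.

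The exponents enter through the $\phi$-module structure of $C_{\glie'}(X)$. The crucial input — the analogue of Kostant's theorem for the principal grading, valid in very good characteristic — is that, for the regular nilpotent $X$ with associated cocharacter $\phi$, the $\phi$-weights occurring on $C_{\glie'}(X)$ are precisely $2k_1, \dots, 2k_r$, where $k_1 \le \dots \le k_r$ are the exponents of the (irreducible, since the derived group is quasi-simple) Weyl group. Transporting this through $\overline{\ad(X)}$ shows that the $\phi$-weights on $\Lie(N/C)$ are exactly $2k_i - 2$, with matching multiplicities, yielding
\[
\Lie(N/C) = \bigoplus_{i=1}^r \Lie(N/C)(\phi; 2k_i - 2).
\]
Since the derived group is quasi-simple, the smallest exponent $k_1 = 1$ occurs with multiplicity one, so the weight-zero summand (the term $i=1$, as $2k_1 - 2 = 0$) is one-dimensional. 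It remains to identify this line with $\Lie(S_0)$: the image $S_0$ of $S$ is a one-dimensional torus (as $S$ is not central, $S \cap C$ is finite), $\Lie(S_0)$ is $\phi$-fixed because $S$ centralizes $\phi(\Gm)$, and a one-dimensional $\phi$-fixed subspace inside the one-dimensional weight-zero space must be all of it. This gives the stated splitting with the $i=1$ term replaced by $\Lie(S_0)$.

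The main obstacle is the weight computation for $C_{\glie'}(X)$ in very good characteristic: establishing that these weights reproduce twice the exponents. In characteristic zero this is Kostant's theorem on the principal $\mathfrak{sl}_2$, but over $K$ I would need to invoke (or reprove) the corresponding statement for the grading defined by the associated cocharacter $\phi$, using the regularity of $X$ and the hypothesis that the characteristic is very good; this is where the arithmetic hypotheses on $G$ do the real work, and it is also where quasi-simplicity is needed to pin down the multiplicities. A secondary point requiring care is the clean identification of $\Lie(S_0)$ with the weight-zero summand, which relies on choosing $S$, as produced in the proof of Theorem \ref{N-mod-C-description}, to commute with $\phi(\Gm)$.
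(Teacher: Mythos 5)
Your overall strategy is viable and genuinely different from the paper's: you replace the paper's projective-geometry argument (which identifies $N/(S\cdot C)$ with the open $N$-orbit of $[X]$ in $\PP(\Lie R_uC)$ and reads off the weights $2k_i-2$ from the torus weights on the tangent space of projective space at a fixed point, \ref{stmt:T-weights-on-tangent-to-P(V)} and \ref{stmt:weights-on-N/B}) with the weight-shifting injection $\overline{\ad(X)}\colon \lie{n}/\lie{c}\hookrightarrow\lie{c}$ plus a dimension count; and your "crucial input'' on the $\phi$-weights of $\Lie(R_uC)$ is exactly the paper's \ref{stmt:regular-centralizer}, proved there before the theorem, so invoking it is legitimate. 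However, one step is wrong as written. You justify that the image of $\overline{\ad(X)}$ avoids the weight-zero part of $\lie{c}$ by asserting that "in very good characteristic $\glie=\Lie(\zeta_G)\oplus\glie'$,'' and that $C_{\glie'}(X)=\bigoplus_{j>0}\lie{c}(\phi;j)$ has dimension $r$. The hypothesis of the theorem is only that $G$ is $D$-standard with quasi-simple derived group, and this does \emph{not} imply the characteristic is very good for the derived group: the paper's own example $G=\GL_n$ (which is $D$-standard for every $n$) with $p\mid n$ is a counterexample. There $\Lie(\zeta_G)=K\cdot I\subseteq\lie{sl}_n=\glie'$, so the sum $\Lie(\zeta_G)+\glie'$ is not direct, and for $X$ regular nilpotent one has $C_{\glie'}(X)=C_{\glie}(X)=K[X]$, of dimension $r+1$ rather than $r$. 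So your argument that the image of $\overline{\ad(X)}$ lies in a space of dimension $r$ complementary to $\Lie(\zeta_G)$ collapses precisely in the cases where the arithmetic hypotheses are delicate.

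The gap is repairable inside your framework without any extra hypothesis on the characteristic. By Theorem \ref{N-mod-C-description}(i) (i.e. \ref{stmt:N-is-solvable}) one has $N\subseteq B$, where $B=P(\phi)$ is the unique Borel subgroup with $X\in\Lie(B)$ (\ref{stmt:characterize-regular-nilpotent}); since $\Lie(P(\phi))=\bigoplus_{m\geq 0}\glie(\phi;m)$, every $\phi$-weight on $\lie{n}$ is $\geq 0$, hence $[\lie{n},X]$ has only $\phi$-weights $\geq 2$ and therefore lies in $\bigoplus_{j\geq 2}\lie{c}(\phi;j)=\Lie(R_uC)$, by \ref{stmt:regular-centralizer}. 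With this substitute for the erroneous step, your injectivity-plus-dimension-count argument ($\dim\lie{n}/\lie{c}=r=\dim\Lie(R_uC)$) goes through and yields the weight shift with matching multiplicities; and your identification of the one-dimensional weight-zero piece with $\Lie(S_0)$ is fine as written (indeed $S$ is by definition the image of $\phi$, so its commuting with $\phi(\Gm)$ is automatic, and $S\cap C$ is finite since the maximal torus of $C$ is $\zeta_G^o$ while $S$ is not central).
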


We will deduce several consequences from Theorems
\ref{N-mod-C-description} and \ref{N-mod-C-weights}. First,
\begin{theoremalpha}
  \label{theorem:RuN-split}
  The unipotent radical of $N_{/\Kalg}$ arises by base change from a
  split unipotent $K$-subgroup of $N$.
\end{theoremalpha}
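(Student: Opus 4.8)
The plan is to realize the desired $K$-subgroup as the \emph{contracting subgroup} attached to the cocharacter $\phi$, and then to read off $K$-splitness from the associated $\phi$-grading. Since $G$ is quasisplit and $X \in \glie(K)$ is regular nilpotent, I would first take $\phi$ to be defined over $K$ (cf. \ref{stmt:associated-cochar}), so that its image $\phi(\Gm)$ is a $K$-\emph{split} $1$-dimensional torus. Because $\Ad(\phi(t))X = t^2 X$ and rescaling does not change centralizers, $\phi(\Gm)$ normalizes $C = C_G(X)$; hence $\phi(\Gm) \subseteq N$, and together with $\zeta_G^o$ it furnishes a concrete (split) choice for the maximal torus $S \cdot \zeta_G^o$ of Theorem \ref{N-mod-C-description}(iii). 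The point of choosing $\phi(\Gm)$ rather than an arbitrary $S$ is exactly that it is split over $K$.

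Next I would record the $\phi$-weight decomposition of $\Lie(N)$. Since $\Lie(C) = \ker(\ad X)$, the $\lie{sl}_2$-theory of the regular nilpotent shows that the nonzero $\phi$-weights on $\Lie(C)$ are the $2k_i > 0$, with $0$-weight part equal to $\Lie(\zeta_G^o)$; and by Theorem \ref{N-mod-C-weights} (applied to the quasi-simple factors of the derived group) the weights on $\Lie(N/C)$ are $0$ on $\Lie(S_0)$ and the values $2k_i - 2 \ge 2$ for $i \ge 2$. Consequently every nonzero $\phi$-weight on $\Lie(N)$ is \emph{strictly positive}, the $0$-weight space is precisely $\Lie(\phi(\Gm)\cdot \zeta_G^o)$, and the positive-weight part has dimension $(2r+\dim\zeta_G) - (1+\dim\zeta_G) = 2r-1$, which by Theorem \ref{N-mod-C-description} equals $\dim R_u(N_{/\Kalg})$.

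I would then define $U \subseteq N$ by the contraction prescription $U(R) = \{\, g \in N(R) : \lim_{t\to 0}\Int(\phi(t))(g) = 1 \,\}$ on $K$-algebras $R$. By the theory of $\Gm$-actions on affine $K$-groups this subfunctor is represented by a smooth connected closed $K$-subgroup, it is unipotent (each element is contracted to $1$), and $\Lie(U) = \bigoplus_{i>0}\Lie(N)(\phi;i)$. Over $\Kalg$ the quotient $N_{/\Kalg}/U_{/\Kalg}$ has Lie algebra the $0$-weight space, which is toral, so this quotient is a torus and therefore $U_{/\Kalg} \supseteq R_u(N_{/\Kalg})$; the dimension count above forces equality. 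For $K$-splitness I would filter $U$ by the $K$-subgroups $U^{(j)}$ contracting the weights $\ge j$: these form a descending chain of $\phi(\Gm)$-stable $K$-subgroups whose successive quotients are commutative and carry, through $\phi$, a $\Gm$-action of a single positive weight $j$, hence are vector groups ($K$-isomorphic to $\Ga^{n_j}$ with the linear action) and in particular $K$-split. Thus $U$ is $K$-split, and it is the required subgroup.

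The essential difficulty, and the whole reason the statement has content, is that over a non-perfect $K$ the geometric unipotent radical need neither descend to $K$ nor be $K$-split; here the $K$-rational cocharacter $\phi$ with strictly positive weights on $R_u$ is what forces both. Concretely, the crux is the lemma that a commutative smooth connected unipotent $K$-group admitting a $\Gm$-action of a single nonzero weight is a vector group, so that no "wound" part can survive a contracting $\Gm$-action; verifying this in characteristic $p$, and checking that the contraction subfunctor is represented by a smooth $K$-subgroup with the expected Lie algebra, are the steps requiring genuine care. Everything else is bookkeeping with the $\phi$-grading supplied by Theorems \ref{N-mod-C-description} and \ref{N-mod-C-weights}. (A variant, using that in a quasisplit $G$ the relative root groups are Weil restrictions of $\Ga$ along separable extensions and hence split, would give an alternative but less conceptual route to the same conclusion.)
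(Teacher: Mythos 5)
Your core strategy coincides with the paper's: take the $K$-rational cocharacter $\phi$ associated to $X$, show that $S=\phi(\Gm)$ acts on $\Lie(N)$ with all nonzero weights strictly positive and with toral zero-weight space, and then conclude by contraction-subgroup theory that $R_u(N_{/\Kalg})$ descends to a split unipotent $K$-subgroup. The paper packages that last step as the Theorem of \S\ref{sub:split-uni-radical}, citing \cite{springer-LAG}*{13.4.2, 13.4.4, 14.4.2} for exactly the representability, Levi-decomposition and splitness facts you re-derive by hand, so that portion of your argument is sound (though note that the weight structure of $\Lie(C)$ that you attribute to ``$\lie{sl}_2$-theory'' is \ref{stmt:regular-centralizer}, which in characteristic $p$ the paper proves by lifting to characteristic $0$ over a discrete valuation ring, not by a principal $\lie{sl}_2$).

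The genuine gap is the case where the derived group of $G$ is \emph{not} quasi-simple, which Theorem \ref{theorem:RuN-split} is meant to cover (the standing hypotheses of \S\ref{sub:regular-centralizer} explicitly allow it). Both Theorem \ref{N-mod-C-weights} and \ref{stmt:regular-centralizer} are only available under the quasi-simplicity hypothesis, and your parenthetical ``applied to the quasi-simple factors of the derived group'' cannot be implemented as stated: in general the derived group is only an almost-direct product of quasi-simple groups, so $G$, $C$ and $N$ do not factor along those pieces. Worse, your intermediate claims become false in this generality. Take $G=\GL_n\times\GL_m$: then $N=N_1\times N_2$, the zero-weight space of $\Lie(N)$ under $S$ is $\Lie\bigl((S_1\times S_2)\cdot\zeta_G^o\bigr)$, of dimension $2+\dim\zeta_G$, strictly larger than $\Lie(S\cdot\zeta_G^o)$; correspondingly the positive-weight part and $R_u(N_{/\Kalg})$ both have dimension $2r-2$, not $2r-1$. (For the same reason you cannot fall back on \ref{stmt:max-torus-of-N}: its proof uses the one-dimensionality of $\Lie(C)(\phi;2)$ from \ref{stmt:regular-centralizer}(b), so it too requires quasi-simplicity.) The architecture of your argument does survive -- the zero-weight space is still the Lie algebra of a maximal torus of $N$, and the contraction subgroup is still the unipotent radical -- but establishing that in general requires the reduction the paper carries out and you omit: construct the separable central surjection $\Phi:\prod_i M_i\to G$ with each $M_i$ having quasi-simple derived group (\ref{stmt:D-standard-quasisimple-parts}), lift $X$, $C$ and $N$ through $\Phi$ (\ref{stmt:can-lift-regular}, \ref{stmt:central-homomorphism-and-regular-centralizer}), verify the weight statements factor by factor over $\prod_i M_i$, and push them forward to $\Lie(N)$ along the surjective separable map $d\Phi$, using smoothness of the normalizers (\ref{stmt:N-smooth}, \ref{stmt:smooth-normalizer}). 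Without this step your proof establishes the theorem only when the derived group of $G$ is quasi-simple.
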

In older language, Theorem \ref{theorem:RuN-split} asserts that the
unipotent radical of $N$ is \emph{defined and split over $K$}.  Next,
fix a Springer isomorphism $\sigma$ and write $u = \sigma(X)$.  The
unipotent radical of the group $C$ is defined over $K$, and $C$ is the
product of $R_u(C)$ with the center $\zeta_G$ of $G$; see
\ref{stmt:centralizer-description}.  The restriction of $\sigma$ to
$R_u(C)$ yields an isomorphism of varieties
\begin{equation*}
  \gamma = \sigma_{\mid \Lie(R_uC)}:\Lie(R_uC) \xrightarrow{\sim} R_uC
\end{equation*}
satisfying $\gamma(0) = \sigma_{\mid \Lie(R_uC)}(0) = 1$. So the
tangent mapping $d\gamma_0$ yields a linear automorphism of the
tangent space
\begin{equation*}
  \Lie(R_uC) = T_1(R_uC).
\end{equation*}
\begin{theoremalpha}
  \label{dsigma-multiple-of-identity}
  Suppose that the derived group of $G$ is quasi-simple.
  \begin{enumerate}
  \item[(1)] The mapping $(d\gamma)_0$ is a scalar multiple of the
    identity automorphism of $\Lie(R_uC)$.
  \item[(2)] Let $B$ a Borel subgroup of $G$ with unipotent radical $U$.
    Then   $\sigma_{\mid \Lie U}:\Lie U \to U$ is an isomorphism, and
    $d(\sigma_{\mid \Lie U})_0:\Lie U \to \Lie U$ is a scalar multiple of
    the identity.
  \end{enumerate}
\end{theoremalpha}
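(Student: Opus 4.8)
The plan is to study the tangent map of $\sigma$ at the base point and to leverage equivariance. First I would record the basic mechanism. Since $\NN$ is a cone with vertex $0$ (cut out by invariants of positive degree, whose differentials vanish at $0$) and $\UU$ has vertex $1$, the Zariski tangent spaces $T_0\NN$ and $T_1\UU$ are each all of $\glie$, so $(d\sigma)_0$ is a linear endomorphism of $\glie$. Differentiating the equivariance relation $\sigma\circ\Ad(g)=\Int(g)\circ\sigma$ at $0$, and using $(d\Ad(g))_0=\Ad(g)$ together with $(d\Int(g))_1=\Ad(g)$, gives $(d\sigma)_0\circ\Ad(g)=\Ad(g)\circ(d\sigma)_0$ for all $g$; applying the Lie functor once more (i.e. differentiating in $g$) shows that $(d\sigma)_0$ commutes with $\ad(\xi)$ for every $\xi\in\glie$. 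Thus $(d\sigma)_0$ is an endomorphism of the adjoint module and in particular stabilizes the derived subalgebra $[\glie,\glie]$.

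For part (1), I would exploit that $N=N_G(C)$ normalizes $C$, hence also $R_uC$ and $\Lie(R_uC)$, so that $\gamma$ inherits $N$-equivariance: $\gamma\circ\Ad(n)=\Int(n)\circ\gamma$ on $\Lie(R_uC)$ for $n\in N$. Differentiating shows $(d\gamma)_0$ commutes with $\Ad(n)$ and with $\ad(\xi)\mid_{\Lie(R_uC)}$ for $\xi\in\Lie(N)$. Taking $n$ in the torus $S$ of Theorem \ref{N-mod-C-description}, $(d\gamma)_0$ preserves the $\phi$-weight spaces of $\Lie(R_uC)$; since the exponent $1$ occurs with multiplicity one for an irreducible Weyl group, the lowest (weight-$2$) space is the line spanned by $X$, so $(d\gamma)_0X=cX$ for a scalar $c\in K$. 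Next I would show that $\ad(-)X\colon\Lie(N)\to\glie$, $\xi\mapsto[\xi,X]$, has kernel $\Lie(N)\cap\glie^X=\Lie(C)$ (using smoothness, \ref{stmt:smooth-centralizers}) and image of strictly positive $\phi$-weight, hence contained in $\Lie(R_uC)$; it therefore induces an injection $\Lie(N/C)\hookrightarrow\Lie(R_uC)$. By the dimension formula of Theorem \ref{N-mod-C-description} both spaces have dimension $r$, so this map is an isomorphism and $\Lie(R_uC)=[\Lie(N),X]$ (Theorem \ref{N-mod-C-weights} makes this transparent, matching the weights $2k_i-2$ to $2k_i$). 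Finally, for $\xi\in\Lie(N)$ the commutation gives $(d\gamma)_0[\xi,X]=[\xi,(d\gamma)_0X]=c[\xi,X]$, and since such brackets exhaust $\Lie(R_uC)$ we obtain $(d\gamma)_0=c\cdot\mathrm{id}$.

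For part (2) I would first verify that $\sigma$ restricts to a map $\Lie U\to U$. Choosing a regular dominant cocharacter $\lambda$, the space $\Lie U$ is the attracting set of $0$ in $\NN$ for $\Ad\lambda(\Gm)$ and $U$ is the attracting set of $1$ in $\UU$; as $\sigma$ is $\lambda$-equivariant it carries one isomorphically onto the other, so $\sigma\mid_{\Lie U}\colon\Lie U\xrightarrow{\sim}U$ is an isomorphism with $(d(\sigma\mid_{\Lie U}))_0=(d\sigma)_0\mid_{\Lie U}$. To see this restriction is scalar I would pass to the $G$-module $[\glie,\glie]$, which in very good characteristic and with quasi-simple derived group is a simple Lie algebra, so its adjoint module is irreducible and the nonzero element $X$ generates it. Combining $(d\sigma)_0X=cX$ from part (1) with $(d\sigma)_0\circ\ad(\xi)=\ad(\xi)\circ(d\sigma)_0$ then forces $(d\sigma)_0=c\cdot\mathrm{id}$ on all of $[\glie,\glie]$, and restricting to $\Lie U\subseteq[\glie,\glie]$ yields (2) (with the same scalar, as $\Lie(R_uC)\subseteq[\glie,\glie]$ as well).

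The hard part will be the two reductions that are not purely formal. For part (1) one must control $\ad(-)X$ in the presence of repeated exponents (e.g.\ type $D_4$, where the value $3$ is an exponent twice, so the weight-$6$ space of $\Lie(R_uC)$ is two-dimensional); injectivity together with the dimension count of Theorem \ref{N-mod-C-description} already forces the isomorphism, but Theorem \ref{N-mod-C-weights} is what makes the weight multiplicities match cleanly. For part (2) the delicate inputs are the attracting-set identification $\sigma(\Lie U)\subseteq U$ and, above all, the irreducibility of the adjoint representation of $[\glie,\glie]$ in very good characteristic, which is precisely what guarantees that $X$ generates the whole derived algebra and thereby propagates the single eigenvalue $c$ from $\Lie(R_uC)$ to $\Lie U$.
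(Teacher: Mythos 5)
Your part (1) is correct, and it takes a genuinely different route from the paper: where the paper proves the group-level statement $\Ad(N)X = \Lie(R_uC)_\reg$ and then uses $N$-equivariance plus density of the regular elements, you prove the infinitesimal statement $[\Lie(N),X] = \Lie(R_uC)$ and use commutation of $(d\gamma)_0$ with $\ad(\xi)$, $\xi \in \Lie(N)$, plus linearity. Both rest on the same inputs (smoothness of $N$, the dimension count of Theorem \ref{N-mod-C-description}, and $\dim\Lie(C)(\phi;2)=1$), and your version even avoids the density step. One justification needs repair: ``image of strictly positive $\phi$-weight, hence contained in $\Lie(R_uC)$'' is not by itself a valid inference, since plenty of positive-weight vectors of $\glie$ lie outside $\Lie(R_uC)$. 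What you need first is that $[\Lie(N),X]\subseteq[\Lie(N),\Lie(C)]\subseteq\Lie(C)$ (because $\Lie(N)$ normalizes $\Lie(C)$ and $X\in\Lie(C)$); only then does positivity of the weights place the image inside $\Lie(R_uC)$, the positive-weight part of $\Lie(C)$.

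Part (2), however, has a genuine gap. Your propagation step rests on the claim that $[\glie,\glie]$ is a simple Lie algebra (equivalently, that the adjoint $G$-module $[\glie,\glie]$ is irreducible) ``in very good characteristic.'' But the hypothesis of the theorem is that $G$ is $D$-standard with quasi-simple derived group, and this does \emph{not} force the characteristic to be very good for the derived group: the paper explicitly notes that $\GL_n$ is $D$-standard for every $p$, including $p\mid n$. For $G=\GL_n$ with $p\mid n$ one has $[\glie,\glie]=\lie{sl}_n$, which contains the central line $K\cdot 1$; so $\lie{sl}_n$ is not simple and is not irreducible as a $G$-module, and ``$X$ generates the adjoint module'' fails as stated. (For $n\ge 3$ one can rescue the eigenspace argument by noting that the only proper nonzero ideal is the center, which does not contain the regular nilpotent $X$; for $n=2$, $p=2$ even this fails, although there the conclusion is vacuously true since $\dim\Lie U=1$. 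In any case, the premise you invoke is false on part of the theorem's range.) The paper's proof of (2) avoids Lie algebra structure altogether: the Richardson elements of $\Lie U$ form a single $B(\Kalg)$-orbit which is dense in $\Lie U$, so $B$-equivariance of $d(\sigma_{\mid\Lie U})_0$ propagates the eigenvalue $\alpha$ from $X$ (supplied by part (1)) to a dense subset, whence to all of $\Lie U$. If you replace your irreducibility step with this orbit-plus-density argument, your proof goes through in full generality. Two further small points: your claim $T_0\NN=\glie$ is false whenever $\zeta_G$ has positive dimension (e.g.\ $\NN\subset\lie{sl}_n$ for $G=\GL_n$ forces $T_0\NN\subseteq\lie{sl}_n$), though this is harmless since you only use $(d\sigma)_0$ on $[\glie,\glie]$; and your attracting-set proof that $\sigma_{\mid\Lie U}:\Lie U\to U$ is an isomorphism is a nice self-contained substitute for the paper's citation of [Mc 05, Remark 10].
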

We remark that Theorems \ref{N-mod-C-description},
\ref{N-mod-C-weights}, and \ref{dsigma-multiple-of-identity} confirm
the observations made by Serre at the end of
\cite{mcninch-optimal}*{Appendix}.

The paper is organized as follows. In \S\ref{section:gp schemes} we
recall some generalities about group schemes and smoothness; in
particular, we describe conditions under which the center of a smooth
group scheme is itself smooth.  In \S\ref{section:reductive} we recall
some facts about reductive groups that we require; in particular, we
define $D$-standard groups and we recall that element centralizers
in $D$-standard groups are well-behaved.  In \S
\ref{section:center-of-nil-centralizer} we give the proof of Theorem
\ref{smoothness-theorem}.  Finally, \S\ref{section:regular} contains
the proofs of Theorems \ref{N-mod-C-description},
\ref{N-mod-C-weights}, \ref{theorem:RuN-split} and
\ref{dsigma-multiple-of-identity}.

\section{Recollections: group schemes}
\label{section:gp schemes}

The main objects of study in this paper are group schemes over a field
$K$.  For the most part, we restrict our attention to \emph{affine}
group schemes $A$ of finite type over $K$.  We begin with some general
definitions.

\subsection{Basic Definitions}
We collect here some basic notions and definitions concerning group schemes;
for a full treatment, the reader is referred to \cite{DG} or to 
\cite{JRAG}*{part I}.

For a commutative ring $\Lambda$, let us write $\Alg_\Lambda$ for the
category of ``all'' commutative $\Lambda$-algebras \footnote{Taken in some
  universe, to avoid logical problems.}.  We will write $\Lambda' \in
\Alg_\Lambda$ to mean that $\Lambda'$ is an object of this category -- i.e.
that $\Lambda'$ is a commutative $\Lambda$-algebra.

We are going to consider affine schemes over $\Lambda$; an affine
scheme $X$ is determined by a commutative $\Lambda$-algebra $R$: the
algebra $R$ determines a functor $X:\Alg_\Lambda \to \operatorname{Sets}$ by
the rule
\begin{equation*}
  X(\Lambda') =  \operatorname{Hom}_{\Lambda-\alg}(R,\Lambda').
\end{equation*}
The scheme $X$ ``is'' this functor, and one says that $X$ is
represented by the algebra $R$. One usually writes $R=\Lambda[X]$ and
one says that $\Lambda[X]$ is the coordinate ring of $X$.  The affine
scheme $X$ has finite type over $\Lambda$ provided that $\Lambda[X]$
is a finitely generated $\Lambda$-algebra.

A group valued functor $A$ on $\Alg_\Lambda$ which is an affine scheme
will be called an affine group scheme. If $A$ is an affine group
scheme, then $\Lambda[A]$ has the structure of a Hopf algebra over
$\Lambda$.

If $\Lambda' \in \Alg_\Lambda$, we write $A_{/\Lambda'}$ for the group
scheme over $\Lambda'$ obtained by base change. Thus $A_{/\Lambda'}$ is
the group scheme over $\Lambda'$ represented by the $\Lambda'$-algebra
$\Lambda[A] \tensor_\Lambda \Lambda'$.

Let us fix an affine group scheme $A$ of finite type over the field
$K$. Write $K[A]$ for the coordinate algebra of $K$, and 
choose an algebraic closure $\Kalg$ of $K$.

\subsection{Comparison with algebraic groups}

In many cases, the group schemes we consider may be identified with a
corresponding algebraic group; we now describe this identification.

If the algebra $K[A]$ is \emph{geometrically reduced} -- i.e.  is such
that $\Kalg[A] = K[A] \tensor_K \Kalg$ has no non-zero nilpotent
elements -- then also $K[A]$ is reduced.  The $\Kalg$-points
$A(\Kalg)$ of $A$ may be viewed as an affine variety over $\Kalg$;
since it is reduced, $\Kalg[A]$ is the algebra of regular functions on
$A(\Kalg)$.  Moreover, $A(\Kalg)$ together with the $K$-algebra $K[A]$
of regular functions on $A(\Kalg)$ may be viewed as a variety defined
over $K$ in the sense of \cite{borel-LAG} or \cite{springer-LAG}.

Conversely, an algebraic group $B$ defined over $K$
in the sense of \cite{borel-LAG} or \cite{springer-LAG} comes
equipped with a $K$-algebra $K[B]$ for which
$\Kalg[B] = K[B] \tensor_K \Kalg$ is the algebra
of regular functions on $B$. The Hopf algebra $K[B]$
represents a group scheme.

The constructions in the preceding paragraphs are inverse to one
another, and these constructions permit us to identify the category of
linear algebraic groups defined over $K$ with the full subcategory of
the category of affine group schemes of finite type over $K$
consisting of those group schemes with geometrically reduced
coordinate algebras.

There are interesting group schemes in characteristic $p>0$ whose
coordinate algebras are not reduced.
Standard examples of non-reduced group schemes
include the group scheme $\mu_p$ represented by $K[T]/(T^p -1)$ with
co-multiplication given by $\Delta(T) = T \tensor T$, and the group
scheme $\alpha_p$ represented by $K[T]/(T^p)$ with co-multiplication
given by $\Delta(T) = T \tensor 1 + 1 \tensor T$. Note that
$\mu_p$ is a subgroup scheme of the multiplicative group $\Gm$,
and $\alpha_p$ is a subgroup scheme of the additive group $\Ga$.

\subsection{Smoothness}

For $\Lambda \in \Alg_K$, let $\Lambda[\epsilon]$ denote the algebra
of \emph{dual numbers} over $\Lambda$; thus $\Lambda[\epsilon]$ is a
free $\Lambda$-module of rank 2 with $\Lambda$-basis $\{1,\epsilon\}$, and
$\epsilon^2 = 0$.  If $A$ is a group scheme over $K$, the natural
$\Lambda$-algebra homomorphisms
\begin{equation*}
  \Lambda \hookrightarrow \Lambda[\epsilon] \xrightarrow{\pi}  \Lambda
\end{equation*}
yield  corresponding  group homomorphisms
\begin{equation*}
  A(\Lambda) \hookrightarrow A(\Lambda[\epsilon])  \xrightarrow{A(\pi)} A(\Lambda).
\end{equation*}

The Lie algebra $\Lie(A)$ of $A$ is the group functor on $\Alg_K$
given for $\Lambda \in \Alg_K$ by
\begin{equation*}
  \Lie(A)(\Lambda) = \ker(A(\Lambda[\epsilon])  \xrightarrow{A(\pi)} A(\Lambda)).
\end{equation*}
Abusing notation somewhat, we are going to write also $\Lie(A)$ for
$\Lie(A)(K)$.  We have:
\begin{stmt}[\cite{DG}*{II.4}]
  \label{stmt:Lie algebra}
  \begin{itemize}
  \item[(a)] $\Lie(A)$ has the structure of a $K$-vector space, and
    the mapping $\Lie(A) \to \Lie(A)(\Lambda)$ induces an isomorphism
    \begin{equation*}
      \Lie(A)(\Lambda) \simeq \Lie(A) \tensor_K \Lambda
    \end{equation*}
    for each $\Lambda \in \Alg_K$.
  \item[(b)] For $\Lambda \in \Alg_K$ and $g \in A(\Lambda)$, the inner
    automorphism $\Int(g)$ determines by restriction a
    $\Lambda$-linear automorphism $\Ad(g)$ of $\Lie(A)(\Lambda) \simeq
    \Lie(A) \tensor_K \Lambda$; thus $\Ad:A \to \GL(\Lie(A))$ is a
    homomorphism of group schemes over $K$.
  \end{itemize}
\end{stmt}

\begin{stmt}[\cite{DG}*{II.5.2.1, p. 238} or \cite{KMRT}*{(21.8) and (21.9)}]
  \label{stmt:smoothness}
  One says that the group scheme $A$ is \emph{smooth} over $K$ if any
  of the following equivalent conditions hold:
  \begin{enumerate}
  \item[(a)] $A$ is geometrically reduced -- i.e. $A_{/\Kalg}$ is reduced.
  \item[(b)] the local ring $K[A]_I$ is regular, where $I$ is the maximal
    ideal defining the identity element of $A$.
  \item[(c)] the local ring $K[A]_I$ is regular for each prime ideal $I$ of $K[A]$.
  \item[(d)] $\dim_K \Lie(A) = \dim A$, where $\dim A$ denotes the
    dimension of the scheme $A$, which is equal to the Krull dimension
    of the ring $K[A]$.
  \end{enumerate}
\end{stmt}

If $A$ is a group scheme over $K$, we often abbreviate the phrase ``$A$ is smooth
over $K$'' to ``$A$ is smooth'';

\subsection{Reduced subgroup schemes}

The following result is well known; a proof may be found in
\cite{mcninch-testerman}*{Lemma 3}.
\begin{stmt}
  \label{sttm:reduced}
  If $K$ is perfect, there is a unique smooth subgroup $A_\red \subset
  A$ which has the same underlying topological space as $A$. If $B$ is
  any smooth group scheme over $K$ and $f:B \to A$ is a morphism, then
  $f$ factors in a unique way as a morphism $B \to A_\red$ followed by
  the inclusion $A_\red \to A$.
\end{stmt}

Note that if $K$ is not perfect, the subgroup scheme
$(A_{/\Kalg})_\red$ of $A_{/\Kalg}$ may not arise by base change from
a subgroup scheme over $K$; see \cite{mcninch-testerman}*{Example 4}.

\subsection{Fixed points and the center of a group scheme}
For the remainder of \S \ref{section:gp schemes}, let us fix a group
scheme $A$ which is affine and of finite type over the field $K$.  Let
$V$ denote an affine $K$-scheme (of finite type) on which $A$ acts.
Define a $K$-subfunctor $W$ of $V$ as follows: for each $\Lambda \in
\Alg_K$, let
\begin{equation*}
  W(\Lambda) = \{v \in V(\Lambda) \mid av = v \quad 
  \text{for each  $\Lambda' \in \Alg_\Lambda$ and each 
    $a \in A(\Lambda')$}\}.
\end{equation*}
We write $W = V^A$; it is the functor of \emph{fixed points} for the
action of $A$.

In general one indeed must define the set $W(\Lambda)$ as the fixed
point set of all $a \in A(\Lambda')$ for varying $\Lambda'$: e.g. if
$A$ is infinitesimal, $A(K) = \{1\}$ while $W(K)$ is typically a
proper subset of $V(K)$.

Since $V$ is affine -- hence separated -- and since $K$ is a field so
that $K[A]$ is free over $K$, we have:
\begin{stmt}[\cite{DG}*{II.1 Theorem  3.6} or \cite{JRAG}*{I.2.6(10)}]
  \label{stmt:closed-fixed-points}
  $V^A$ is a closed subscheme of $V$.
\end{stmt}

The following assertion is somewhat related to \cite{JRAG}*{I.2.7 (11) and
  (12)}.
\begin{stmt}
  \label{stmt:alg-closed-fixed-points}
  Suppose in addition that $A$ is smooth over $K$. Then for any commutative
  $K$-algebra $K'$ which is an algebraically closed field, we have
  $V^A(K') = V(K')^{A(K')}$. \footnote{Here $V(K')^{A(K')}$ denotes
the subset of $V(K')$ fixed by each element of the group $A(K')$.}
\end{stmt}

\begin{proof}
  It is immediate from definitions that $V^A(K') \subset
  V(K')^{A(K')}$.  In order to prove the inclusion $V(K')^{A(K')}
  \subset V^A(K')$, we will assume (for notational convenience) that
  $K = K'$ is algebraically closed. Suppose that $v \in V(K)$ and
  that $v$ is fixed by each element of $A(K)$.

  Consider now the morphism $\phi:A \to V$ given for each $\Lambda \in
  \Alg_K$ and each $a \in A(\Lambda)$ by the rule $a \mapsto av$.  The
  result will follow if we argue that $\phi$ is a constant morphism.
  But we know that $\phi:A(K) \to V(K)$ is constant. Since $A$ is a
  reduced scheme, the morphism $\phi$ is determined by its values on
  closed points; since $K$ is algebraically closed, the closed points
  are in bijection with $A(K)$; the fact that $\phi$ is constant now
  follows.
\end{proof}

Consider now the action of $A$ on itself by inner automorphisms.  For
any $\Lambda \in \Alg_K$ and any $a \in A(\Lambda)$, let us write
$\Int(a)$ for the inner automorphism $x \mapsto axa^{-1}$ of the
$\Lambda$-group scheme $A_{/\Lambda}$. The fixed point subscheme for
this action is \emph{by definition} the center $Z$ of $A$; thus we
have the following result (see also \cite{DG}*{II.1.3.9}):
\begin{stmt}
  The center $Z$ is a closed subgroup scheme of $A$. For any
   $\Lambda \in \Alg_K$, we have
  \begin{equation*}
    Z(\Lambda) = \{a \in A(\Lambda) \mid \Int(a) \ \text{is the trivial
automorphism of the group scheme $A_{/\Lambda}$}\}.
  \end{equation*}
\end{stmt}

\subsection{Smoothness of the center}
\label{sub:center-as-scheme}
Write $\alie = \Lie(A)$ for the Lie algebra of $A$. Recall from
  \ref{stmt:Lie algebra} the adjoint action $\Ad$ of $A$ on $\alie$.
\begin{stmt}
  \label{stmt:Lie(Z)-as-fixed-points}
  Regarding $\alie$ as a $K$-scheme, the Lie algebra of $Z$ is the
  fixed point subscheme of $\alie$ for the adjoint action of $A$.
\end{stmt}

\begin{proof}
  Since $Z$ is the fixed point subscheme of $A$ for the action of $A$
  on itself by inner automorphisms, the assertion follows from
  \cite{DG}*{II.4.2.5}.
\end{proof}

In particular, $\Lie(Z)$ identifies with the
$K$-points $\alie^{\Ad(A)}(K)$ of this fixed point functor, and one recovers the fixed
point functor from the $K$-points \cite{JRAG}*{I.2.10(3)}:
\begin{equation*}
  \alie^{\Ad(A)}(\Lambda) = \Lie(Z) \tensor_K \Lambda.
\end{equation*}

\begin{stmt}
  The center $Z$ of $A$ is smooth over $K$ if and only if
  \begin{equation*}
    \dim Z  = \dim_K \alie^{\Ad(A)}(K) = \dim_K \Lie(Z).
  \end{equation*}
\end{stmt}

\begin{proof}
  Immediate from \ref{stmt:smoothness} and the observation
  \ref{stmt:Lie(Z)-as-fixed-points}.
\end{proof}

\begin{example}
  Let $K$ be a perfect field of characteristic $p>0$,
  and let $A$ be the smooth group scheme over $K$ for which
  \begin{equation*}
    A(\Lambda) = \left \{
      \begin{pmatrix}
        t & 0 & 0 \\
        0 & t^{p} & s \\
        0 & 0 & 1
      \end{pmatrix}  \mid t \in \Lambda^\times, s \in \Lambda \right \}
  \end{equation*}
  for each $\Lambda \in \Alg_K$.
  The Lie algebra $\alie$ is spanned as a $K$-vector space by the matrices
  \begin{equation*}
    X =
    \begin{pmatrix}
      1 & 0 & 0 \\
      0 & 0 & 0 \\
      0 & 0 & 0 
    \end{pmatrix} \quad Y =
    \begin{pmatrix}
      0 & 0 & 0 \\
      0 & 0 & 1 \\
      0 & 0 & 0 
    \end{pmatrix}.
  \end{equation*}
  
  Write $Z = Z(A)$ for the center of $A$. Since $K$ is perfect, we may
  form the corresponding reduced subgroup scheme $Z_\red \subset Z$ --
  see e.g.  \cite{mcninch-testerman}*{Lemma 3}; $Z_\red$ is a smooth
  group scheme over $K$.

  We are going to argue that $Z$ is not smooth -- i.e. that $Z \neq
  Z_\red$.  Observe first that $\alie$ is an Abelian Lie algebra; thus
  its center $\mathfrak{z}(\alie)$ is all of $\alie$.

  Now, if $\Kalg$ is an algebraic closure of $K$, it is easy to check
  that the center of the group $A(\Kalg)$ is trivial. It follows that
  the smooth group scheme $Z_\red$ satisfies $Z_\red(\Kalg) = 1$; thus
  $Z_\red$ is trivial and $\Lie(Z_\red) = 0$.

  It is straightforward to verify that the multiples of $X$ are the
  only fixed points of $\alie$ under the adjoint action of $A$. Thus
  $\Lie(Z) = \alie^{\Ad(A)}$ has dimension 1 as a $K$-vector space.
  Since $\dim Z = \dim Z_\red = 0$, it follows that $Z$ is not smooth.

  Note that for this  example, both containments in the following
  sequence are proper:
  \begin{equation*}
    \Lie(Z_\red) \subset \Lie(Z) \subset \lie{z}(\alie).
  \end{equation*}
\end{example}

\subsection{Smoothness of certain fixed point subgroup schemes}
\label{sub:diag-smooth-fixed-points}

Recall that a group scheme $D$ over $K$ is \emph{diagonalizable} if
$K[D]$ is spanned as a linear space by the group of characters
$X^*(D)$.  The group scheme $D$ is of \emph{multiplicative type} if
$D_{/\Kalg}$ is diagonalizable. 

Suppose in this section that $D$ is either a group scheme of multiplicative
type, or that $D$ is an \'etale group scheme over $K$ for which the finite group
$D(\Kalg)$ has order invertible in $K$.

Assume that $D$ acts on the group scheme $A$ by group automorphisms:
for any $\Lambda \in \Alg_K$ and any $x \in D(\Lambda)$, the element
$x$ acts on the group scheme $A_{/\Lambda}$ as a group scheme
automorphism.

The fixed points $A^D$ form a closed subgroup scheme of $A$
\label{stmt:closed-fixed-points}.  Moreover, we have:
\begin{stmt}
  \label{stmt:diag-smooth-fixed-points}
  If $A$ is smooth over $K$, then also the fixed point subgroup scheme $A^D$
  is smooth over $K$.
\end{stmt}

\begin{proof}
  According to the ``Th\'eor\`eme de lissit\'e des centralisateurs''
  \cite{DG}*{II.5.2.8 (p. 240)} the result will follow if we know that
  $H^1(D,\Lie(A)) = 0$.  It suffices to check this condition after
  extending scalars; thus we may and will suppose that $D$ is
  diagonalizable or that $D$ is the constant group scheme determined
  by a finite group whose order is invertible in $K$.
 
  In each case, one knows that the cohomology group $H^n(D,M) $ is $0$
  for \emph{all} $D$-modules $M$ and all $n \ge 1$; for a finite group
  with order invertible in $K$, this vanishing is well-known; for a
  diagonalizable group, see \cite{JRAG}*{I.4.3}.
\end{proof}

\subsection{Possibly disconnected groups}
\label{sub:possibly-disconnected}

Let $G$ be a smooth linear algebraic group over $K$.
\begin{stmt}
  \label{sub:center-in-disconnected}
  Suppose that $1 \to G \to G_1 \to E \to 1$ is an exact sequence,
  where $E$ is finite \'etale and $E(\Kalg)$ has order invertible in
  $K$.  If the center of $G$ is smooth, then the center of $G_1$ is
  smooth.
\end{stmt}

\begin{proof}
  Write $Z$ for the center of $G$, write $Z_1$ for the center of $G_1$.
  Note that $E$ acts naturally on $Z$.

  There is an exact sequence of groups
  \begin{equation*}
    1 \to Z^E \to Z_1 \to H \to 1
  \end{equation*}
  for a subgroup $H \subset E$. Since $Z$ is
  smooth, the smoothness of $Z^E$ follows from
  \ref{stmt:diag-smooth-fixed-points}; since $H$ is smooth, one
  obtains the smoothness of $Z_1$ by applying \cite{KMRT}*{Cor.
    (22.12)}.
\end{proof}

\subsection{Split unipotent radicals}
\label{sub:split-uni-radical}

Fix a smooth group scheme $A$ over $K$.  A smooth group scheme $B$
over $K$ is unipotent if each element of $B(\Kalg)$ is unipotent.
Recall that the unipotent radical of $A_{/\Kalg}$ is the maximal
closed, connected, smooth, normal, unipotent subgroup scheme of
$A_{/\Kalg}$.
\begin{stmt}\cite{springer-LAG}*{Prop. 14.4.5}
  If $K$ is perfect, there is a smooth subgroup scheme $R_uA \subset A$
  such that $R_uA_{/\Kalg}$ is the unipotent radical of $A_{/\Kalg}$.
\end{stmt}

If $K$ is not perfect, then in general $R_uA_{/\Kalg}$ does not arise
by base change from a $K$-subgroup scheme of $A$.  The unipotent group
$B$ is said to be \emph{split} provided that there are closed subgroup
schemes
\begin{equation*}
  1 = B_0 \subset B_1 \subset \cdots \subset B_n = B
\end{equation*}
such that $B_i/B_{i-1} \simeq \Ga$ for $1 \le i \le n$.

\begin{theorem}
  Let $A$ be a connected, solvable, and smooth group scheme over $K$.
  Let $T \subset A$ be a maximal torus, and suppose that $\phi:\Gm \to
  T$ is a cocharacter. Write $S$ for the image of $\phi$. If $\Lie(T)$
  is precisely the set of fixed points $\Lie(A)^S$, and if each
  non-zero weight $\lambda$ of $S$ on $\Lie(A)$ satisfies $\langle
  \lambda,\phi \rangle > 0$, then $R_uA$ is defined over $K$ and is a
  split unipotent group scheme.
\end{theorem}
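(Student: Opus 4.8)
The plan is to realize $R_uA$ as the ``contracting'' subgroup attached to $\phi$, using the dynamic description of subgroups associated to a cocharacter. Regard $A$ as a smooth affine group scheme over $K$. Attached to the $K$-cocharacter $\phi$ are three closed $K$-subgroup schemes: the centralizer $Z := A^S = C_A(S)$; the subgroup $U := U_A(\phi)$ whose $\Lambda$-points are those $x \in A(\Lambda)$ for which the orbit morphism $t \mapsto \phi(t)x\phi(t)^{-1}$ extends to $t=0$ with value $1$; and the opposite subgroup $U^- := U_A(-\phi)$. First I would record the standard facts about these subgroups for a smooth affine group, valid without any reductivity hypothesis: $Z$, $U$, and $U^-$ are smooth over $K$, the groups $U$ and $U^-$ are connected and \emph{split} unipotent --- each carrying a filtration with $\Ga$-quotients indexed by the weights of $S$ on $\Lie(A)$ of the appropriate sign --- and, writing $\Lie(A) = \bigoplus_\lambda \Lie(A)_\lambda$ for the weight-space decomposition under $\Ad \circ \phi$, one has $\Lie(U) = \bigoplus_{\langle \lambda,\phi\rangle > 0}\Lie(A)_\lambda$, $\Lie(Z) = \Lie(A)_0 = \Lie(A)^S$, and $\Lie(U^-) = \bigoplus_{\langle \lambda,\phi\rangle < 0}\Lie(A)_\lambda$. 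Smoothness of $Z$ may also be read off directly from \ref{stmt:diag-smooth-fixed-points}.

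Next I would feed in the two hypotheses. Because $\Lie(A)^S = \Lie(T)$, we get $\Lie(Z) = \Lie(T)$; because every nonzero weight $\lambda$ satisfies $\langle \lambda,\phi\rangle > 0$, there are no negative weights, so $\Lie(U^-) = 0$ and the smooth group $U^-$ is trivial. Set $P := Z \ltimes U$. Then $\Lie(P) = \Lie(Z) \oplus \Lie(U) = \Lie(A)_0 \oplus \bigoplus_{\langle\lambda,\phi\rangle>0}\Lie(A)_\lambda = \Lie(A)$. Since $P$ is a smooth closed subgroup of the smooth \emph{connected} group $A$ with $\dim P = \dim_K \Lie(P) = \dim_K \Lie(A) = \dim A$ (using \ref{stmt:smoothness}), it follows that $P = A$. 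As $A = Z \ltimes U$ is connected and $U$ is connected, the quotient $Z \cong A/U$ is connected; since $T \subseteq Z$ with $\Lie(Z) = \Lie(T)$ and $Z$ smooth, the closed immersion $T \hookrightarrow Z$ of equal dimension is an isomorphism, so $Z = T$. Thus $A = T \ltimes U$.

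Finally I would identify $U$ with the unipotent radical. The group $U$ is a smooth, connected, normal, unipotent $K$-subgroup of $A$, and $A/U \cong T$ is a torus. After base change to the (perfect) field $\Kalg$, the group $U_{/\Kalg}$ is a connected smooth normal unipotent subgroup with reductive quotient $T_{/\Kalg}$, hence is precisely the unipotent radical $R_u(A_{/\Kalg})$. Therefore $R_uA$ is defined over $K$ --- it equals the $K$-group $U = U_A(\phi)$ --- and it is split by the first step.

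The step I expect to be the main obstacle is the first one: establishing, for a smooth connected \emph{solvable} group with no reductivity available, that the limit functors $U_A(\phi)$ and $U_A(-\phi)$ are representable by smooth closed subgroup schemes, that $P_A(\phi) = Z_A(\phi) \ltimes U_A(\phi)$, and --- most importantly --- that $U_A(\phi)$ is split with the stated weight-space Lie algebra. This is exactly the reductivity-free ``dynamic method''; the genuine technical content is extracting the $\Ga$-filtration of $U_A(\phi)$ from the positive-weight decomposition of $\Lie(A)$. Once that machinery is in place, the hypotheses on the weights do all of the remaining work through an elementary dimension count.
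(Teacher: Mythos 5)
Your proposal is correct and is essentially the paper's own argument: the paper likewise applies the dynamic method (citing Springer's \emph{Linear Algebraic Groups}, \S 13.4), writing $P(\phi)$ as the product of $M = C_A(S)$ with the smooth connected normal unipotent subgroup $U(\phi)$, using positivity of the weights to force $U(-\phi)$ to be trivial and hence $A = P(\phi)$, identifying $M = T$ from $\Lie(T) = \Lie(M)$, and quoting Springer 14.4.2 for the $K$-splitness of $U(\phi)$. The only differences are cosmetic --- you deduce $A = P(\phi)$ by a Lie-algebra dimension count where the paper cites Springer 13.4.4, and the reductivity-free ``standard facts'' you flag as the main obstacle are precisely what the paper outsources to Springer's book, where they are proved for arbitrary connected smooth groups.
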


\begin{proof}
  Write $P = P(\phi)$ for the smooth subgroup scheme of $A$ determined
  by $\phi$ as in \cite{springer-LAG}*{\S13.4}; it is the 
  subgroup \emph{contracted by} the cocharacter $\phi$.  Write $M =
  C_A(S)$; $M$ is connected
  \cite{springer-LAG}*{p. 110} and smooth \cite{DG}*{p. 476, cor. 2.5}.
  There is a smooth, connected, normal, unipotent subgroup scheme
  $U(\phi) \subset P$ for which the product morphism
  \begin{equation*}
     M \times U(\phi) \to P
  \end{equation*}
  is an isomorphism of varieties; \cite{springer-LAG}*{13.4.2}.  Moreover,
  since $\langle \lambda,\phi \rangle > 0$ for each weight of $S$ on
  $\Lie(A)$, it follows that $U(-\phi)$ is trivial. Thus \emph{loc. cit.}
  13.4.4 shows that $A = P$.

  Evidently $T \subset M$. Since $\Lie(T) = \Lie(M)$, it follows that
  $M = T$. It follows that $U(\phi)_{/\Kalg}$ is the unipotent radical
  of $A_{/\Kalg}$ as desired.

  Finally, it follows from \cite{springer-LAG}*{14.4.2} that $U(\phi)$ is a
  $K$-split unipotent group, and the proof is complete.
\end{proof}

\subsection{Torus actions on a projective space}
\label{section:torus-actions}

Let $T$ be a split torus over $K$, and let $V$ be a $T$-representation.  For
$\lambda \in X^*(T)$, let $V_\lambda$ be the corresponding weight
space; thus $T$ acts on $V_\lambda$ through the character $\lambda:T
\to \Gm$.  There are distinct characters $\lambda_1,\dots,\lambda_n
\in X^*(T)$ such that
\begin{equation*}
  V = \bigoplus_{i=1}^n V_{\lambda_i};
\end{equation*}
the $\lambda_i$ are the \emph{weights} of $T$ on $V$.  Let us fix a
vector $0 \ne v \in V_{\lambda_1}$.

Consider now the projective space $\PP(V)$ of lines through the origin
in $V$; for a non-zero vector $w \in V$, write $[w]$ for the
corresponding point of $\PP(V)$.  The linear action of $T$ on $V$
induces in a natural way an action of $T$ on $\PP(V)$.

Since $v$ is a weight vector for $T$, the point $[v] \in \PP(V)(K)$
determined by $v$ is fixed by the action of $T$. Consider the tangent
space $M = T_{[v]}\PP(V)$; since $[v]$ is a fixed point of $T$, the
action of $T$ on $\PP(V)$ determines a linear representation of $T$ on
$M$.

\begin{stmt}
  \label{stmt:T-weights-on-tangent-to-P(V)}
  The non-zero weights of $T$ on $M = T_{[v]}\PP(V)$ are the characters $\lambda_i -
  \lambda_1$ for $1 < i \le n$.  Moreover,
  \begin{equation*}
    \dim M_0 = \dim V_{\lambda_1} - 1 \quad \text{and}\quad 
    \dim M_{\lambda_i - \lambda_1} = \dim V_{\lambda_i}, \quad 1 < i \le n.
  \end{equation*}
\end{stmt}

\begin{proof}
  Choose a basis $S_1,S_2,\dots,S_r$ for the dual space of $V^\vee$
  for which $S_i \in V^\vee_{-\lambda_i}$ for $1 \le i \le r$ -- i.e.
  the vector $S_i$ has weight $-\lambda_i$ for the contragredient
  action of $T$ on $V^\vee$.  Without loss of generality, we may and
  will assume that $S_1$ satisfies $S_1(v) \ne 0$ and that $S_i(v) =
  0$ for $2 \le i \le n$.

  Now consider the affine open subset $\VV = \PP(V)_{S_1}$ of $\PP(V)$
  defined by the non-vanishing of $S_1$.  One knows that $[v]$ is a
  point of $\VV$. Moreover, $\VV \simeq \Aff^{r-1}$ where $r = \dim
  V$. Since $S_1$ is a weight vector for the action of the torus $T$, it
  is clear that $\VV$ is a $T$-stable subvariety of $\PP(V)$. More precisely,
  $\VV$ identifies with the affine scheme
  $\Spec(\A)$ where $\A$ is the $T$-stable subalgebra
  \begin{equation*}
    \A = k\left[\dfrac{S_2}{S_1},\dfrac{S_3}{S_1},\dots,\dfrac{S_r}{S_1} \right]
  \end{equation*}
  of the field of rational functions $k(\PP(V))$.
  
  Under this identification, the point $[v] \in \VV$ corresponds to
  the point $\vec 0$ of $\Aff^{r-1}$; i.e. to the maximal ideal $\mm =
  \left ( \dfrac{S_2}{S_1},\dfrac{S_3}{S_1},\dots,\dfrac{S_r}{S_1} \right )
  \subset \A$. Now, $\mm$ and $\mm^2$ are $T$-invariant; since
  $\dfrac{S_i}{S_1}$ has weight $-\lambda_i + \lambda_1$, evidently the
  weights of $T$ in its representation on $\mm/\mm^2$ are of the form
  $-\lambda_i + \lambda_1$, and one has
  \begin{equation*}
    \dim (\mm/\mm^2)_0 = \dim V_{\lambda_1} - 1 \quad \text{and}\quad 
    \dim (\mm/\mm^2)_{-\lambda_i+\lambda_1} = \dim V_{\lambda_i}, \quad 1 < i \le n.
  \end{equation*}

  The assertion now follows since there is a $T$-equivariant isomorphism
  between the tangent space to $\PP(V)$ at $[v]$ -- i.e.  the space $M
  = T_{[v]} \PP(V)$ -- and the contragredient representation
  $(\mm/\mm^2)^\vee.$
\end{proof}

\subsection{Surjective homomorphisms between 
group schemes; normalizers}

In this section, let us fix group schemes $G_1$ and $G_2$ over $K$,
and suppose that $f:G_1 \to G_2$ is a \emph{surjective} homomorphism
of group schemes; recall that $f$ is surjective provided that the
comorphism $f^*:K[G_2] \to K[G_1]$ is injective (cf.
\cite{KMRT}*{Prop. 22.3}).

The mapping $f$ is said to be \emph{separable} provided that
$df:\Lie(G_1) \to \Lie(G_2)$ is surjective as well.

Let $C_2 \subset G_2$ be a subgroup scheme, and let $C_1 = f^{-1}C_2$
be the scheme-theoretic inverse image.
\begin{stmt}
  \label{stmt:inverse-image}
  \begin{enumerate}
  \item[(a)] The mapping obtained by restriction $f_{\mid C_1}:C_1 \to
    C_2$ is surjective.
  \item[(b)] If $C_1$ is smooth, then $C_2$ is smooth.
  \item[(c)] If $f$ is separable and $C_2$ is smooth, then $C_1$ is
    smooth.
  \item[(d)] Suppose that $f$ is separable, and that either $C_1$ or
    $C_2$ is smooth.  Then both $C_1$ and $C_2$ are smooth, and
    $f_{\mid C_1}$ is separable.
  \end{enumerate}
\end{stmt}

\begin{proof}
  (a) and (b) follow from \cite{KMRT}*{Prop. 22.4}.

  We now prove (c). Since $f$ is separable and surjective,
  \cite{KMRT}*{Prop. 22.13} shows that $\ker f$ is a smooth group
  scheme over $K$. Note that $\ker f \subset C_1$. If $C_2$ is smooth,
  the smoothness of $C_1$ now follows from \cite{KMRT}*{Cor. 22.12}.
  
  We finally prove (d). The smoothness assertions have already been
  proved. We again know $\ker f$ to be  smooth over $K$.
  In particular, $\dim \ker f = \dim \ker df$. Since $\ker f \subset
  C_1$, we have
  \begin{equation*}
    \dim \image(df_{\mid C_1}) = \dim \Lie(C_1) - \dim \ker df_{\mid C_1}
    = \dim C_1 - \dim \ker f_{\mid C_1} = \dim C_2,
  \end{equation*}
  where we have used \cite{KMRT}*{Prop. 22.11} for the final equality;
  since $C_2$ is smooth, it follows that $df_{\mid C_1}:\Lie(C_1) \to
  \Lie(C_2)$ is surjective.
\end{proof}

Write $N_2 = N_{G_2}(C_2)$ for the normalizer of $C_2$ in $G_2$.
Thus $N_2$ is the subgroup functor given for $\Lambda
\in \Alg_K$ by the rule
\begin{align*}
  N_2(\Lambda) =& \{g \in G_2(\Lambda) \mid g \ \text{normalizes the subgroup
    scheme $C_{2/\Lambda} \subset G_{2/\Lambda}$}\} \\
   =& \{ g \in G_2(\Lambda) \mid gC_2(\Lambda')g^{-1} = C_2(\Lambda')
   \ \text{for all}\ \Lambda' \in \Alg_{\Lambda}\}.
\end{align*}
According to \cite{DG}*{II.1 Theorem 3.6(b)}, $N_2$ is a closed subgroup
scheme of $G_2$.

As a consequence of \ref{stmt:inverse-image}, we find the following:
\begin{stmt}
  \label{stmt:smooth-normalizer}
  Set $N_1 = f^{-1}N_2$.
  \begin{enumerate}
  \item[(a)] $N_1 =  N_{G_1}(C_1)$.
  \item[(b)] $f_{\mid N_1}:N_1 \to N_2$ is surjective.
  \item[(c)] If $N_1$ is smooth, then $N_2$ is smooth.
  \item[(d)] If $f$ is separable and $N_2$ is smooth, then $N_1$ is
    smooth.
  \item[(e)] Suppose that $f$ is separable and that either $N_1$ or
    $N_2$ is smooth. Then both $N_1$ and $N_2$ are smooth, and
    $f_{\mid N_1}$ is separable.
  \end{enumerate}
\end{stmt}


\section{Recollections: reductive groups}
\label{section:reductive}

Let $G$ be a connected and reductive group over $K$. Thus $G$ is a smooth group
scheme over $K$, or equivalently $G$ is a linear algebraic group
defined over $K$. To say that $G$ is reductive means that the
unipotent radical of $G_{/\Kalg}$ is trivial. We are going to write
$\zeta_G = Z(G)$ for the center of $G$.

Some results will be seen to hold for a reductive group $G$ in case
$G$ is \emph{$D$-standard}; in the next few sections, we explain this
condition.  We must first recall the notions of good and bad
characteristic.

\subsection{Good and very good primes}
Suppose that $H$ is a smooth group scheme over $K$ -- i.e. an
algebraic group over $K$ -- for which $H_{/\Kalg}$ is quasisimple;
thus $H$ is geometrically quasisimple. Write $R$ for the root system
of $H$. The characteristic $p$ of $K$ is said to be a bad prime for
$R$ -- equivalently, for $H$ -- in the following circumstances: $p=2$
is bad whenever $R \not = A_r$, $p=3$ is bad if $R = G_2,F_4,E_r$, and
$p=5$ is bad if $R=E_8$.  Otherwise, $p$ is good.

A good prime $p$ is \emph{very good} provided that either
$R$ is not of type $A_r$, or that $R=A_r$ and $r \not
\equiv -1 \pmod p$.

If $H$ is any reductive group, one may apply \cite{KMRT}*{Theorems
  26.7 and 26.8} \footnote{\cite{KMRT} only deals with the semisimple
  case; the extension to a general reductive group is not difficult to
  handle, and an argument is sketched in the footnote found in
  \cite{mcninch-testerman}*{\S2.4}.}  to see that there is a possibly
inseparable central isogeny
\begin{equation}
  \label{eq:cover-G}
   \quad R(H) \times \prod_{i=1}^m H_i \to H
\end{equation}
where the radical $R(H)$ of $H$ is a torus, and where for $1 \le i \le
m$ there is an isomorphism $H_i \simeq R_{L_i/K} J_i$ for a finite
separable field extension $L_i/K$ and a geometrically quasisimple,
simply connected group scheme $J_i$ over $L_i$; here, $R_{L_i/K} J_i$
denotes the ``Weil restriction'' -- or restriction of scalars -- of
$J_i$ to $K$, cf.  \cite{springer-LAG}*{\S11.4}.  The $H_i$ are
uniquely determined by $H$ up to order of the factors.  Then $p$ is
\emph{good}, respectively \emph{very good}, for $H$ if and only if
that is so for $J_i$ for every $1 \le i \le m$.

\subsection{$D$-standard}
\label{sub:d-standard}
Recall from \S\ref{sub:diag-smooth-fixed-points} the notion of a
diagonalizable group scheme, and of a group scheme of multiplicative
type.

\begin{stmt}
  If $D$ is subgroup scheme of $G$ of multiplicative type, the
  connected centralizer $C_G(D)^o$ is reductive.
\end{stmt}

When $D$ is smooth, the preceding result is well-known: the group $D$
is the direct product of a torus and a finite \'etale group scheme all
of whose geometric points have order invertible in $K$.  The
centralizer of a torus is (connected and) reductive, and one is left
to apply a result of Steinberg \cite{steinberg-endomorphisms}*{Cor.
  9.3} which asserts that the centralizer of a semisimple automorphism
of a reductive group has reductive identity component.  In fact, the
result remains valid when $D$ is no longer smooth; a proof will appear
elsewhere.

Consider reductive groups $H$ which are direct products
\begin{equation*}
  (*) \quad
  H=H_1  \times T
\end{equation*}
where $T$ is a torus, and where $H_1$ is a semisimple group 
for which the characteristic of $K$ is \emph{very good}.


\begin{defin}
  A reductive group $G$ is \emph{$D$-standard} if there exists a
  reductive group $H$ of the form $(*)$, a subgroup $D \subset H$ such
  that $D$ is of multiplicative type, and a separable isogeny between
  $G$ and the reductive group $C_H(D)^o$. \footnote{This definition
    does not require the knowledge that $C_H(D)^o$ is reductive: if
    there is an isogeny between $G$ and $C_H(D)^o$, then $C_H(D)^o$ is
    reductive.}
\end{defin}

\begin{stmt}[\cite{mcninch-optimal}*{Remark 3}]
  For any $n \ge 1$, the group $\GL_n$ is $D$-standard. The group
  $\SL_n$ is $D$-standard if and only if $p$ does not divide $n$.
\end{stmt}

In order to prove \ref{stmt:D-standard-quasisimple-parts} below, we
first observe:
\begin{stmt}
  \label{stmt:base-change-M}
  Let $M,G_1,G_2$ be affine group schemes of finite type over $K$.
  Let $f:G_1\to G_2$ be a surjective morphism of group schemes,
  suppose that $\ker f$ is central in $G_1$, and let $\phi:M \to G_2$
  be a homomorphism of group schemes for which
  $\phi^{-1}(\zeta_{G_2})$ is central in $M$.
  Consider the group scheme $\widetilde{M}$ defined by the Cartesian diagram:
  \begin{equation*}
    \begin{diagram}
      \node{\widetilde{M} = M \times_{G_2} G_1}  \arrow{s,t}{\widetilde{\phi}} 
      \arrow{e,t}{\tilde f} \node{M} \arrow{s,b}{\phi} \\
      \node{G_1} \arrow{e,t}{f} \node{G_2}
    \end{diagram}    
  \end{equation*}
  Then 
  \begin{enumerate}
  \item[(a)] $\widetilde{\phi}^{-1}(\zeta_{G_1})$ is central in $\widetilde{M}$.
  \item[(b)] Suppose that $G_1,G_2$ are connected and reductive, that $f$ is a
    separable isogeny, and that $M$ is connected and quasisimple. Then
    $\widetilde{M}$ is connected and quasisimple.
  \end{enumerate}
\end{stmt}

\begin{proof}
  To prove (a), let $N = \widetilde{\phi}^{-1}(\zeta_{G_1})$. It is
  enough to show that $\widetilde{\phi}(N)$ is central in $G_1$ and
  that $\widetilde{f}(N)$ is central in $M$. The first of these
  observations is immediate from definitions, while the second follows
  from assumption on the mapping $\phi:M \to G_2$ once we observe that
  $\widetilde{f}(N) \subset \phi^{-1}(\zeta_{G_2}).$

  For (b), we view $\widetilde{f}$ as arising by base change from $f$.
  Then $\widetilde{f}$ is an isogeny since $\ker(f)_{/\Kalg}$ and
  $\ker(\widetilde{f})_{\Kalg}$ coincide. Moreover, it follows from
  \cite{liu}*{Prop 4.3.22} that $\widetilde{f}$ is separable (since it
  is \'etale). Thus $\widetilde{f}$ is a separable isogeny; since
  $\widetilde{M}$ is separably isogenous to a connected quasisimple
  group, it is itself connected and quasisimple.
\end{proof}

\begin{stmt}
  \label{stmt:D-standard-quasisimple-parts}
  Suppose that the $D$-standard reductive group $G$ is split over $K$.
  There are $D$-standard reductive groups $M_1,\dots,M_d$ together
  with a homomorphism $\Phi:M \to G$, where $M = \prod_{i=1}^dM_i$, such
  that the following hold: 
    \begin{itemize}
    \item[(a)] The derived group of $M_i$ is geometrically quasisimple
      for $1 \le i \le d$.
      \item[(b)] $\Phi$ is surjective and separable.
      \item[(c)] For $1 \le i < j \le d$, the image in $G$ of $M_i$ and
      $M_j$ commute.
    \item[(d)] The subgroup scheme $\Phi^{-1}(\zeta_G)$ is central in
      $\prod_{i=1}^d M_i$.
    \end{itemize}
\end{stmt}

\begin{proof}
  We argue first that it suffices to prove the result after replacing
  $G$ be a separably isogenous group. More precisely, we prove: $(*)$
  if $f:G_1 \to G_2$ is a separable isogeny between $D$-standard
  reductive groups $G_1$ and $G_2$, then
  \ref{stmt:D-standard-quasisimple-parts} holds for $G_1$ if and only
  if it holds for $G_2$.

  Suppose first that the conclusion of
  \ref{stmt:D-standard-quasisimple-parts} is valid for $G_1$.  If
  $\Phi:M \to G_1$ is a homomorphism for which (a)--(d) hold, then
  evidently (a)--(d) hold for $f \circ \Phi$.

  Now suppose that the conclusion of
  \ref{stmt:D-standard-quasisimple-parts} is valid for $G_2$, and that
  $\Phi:M \to G_2$ is a homomorphism for which (a)--(d) hold. For each
  $1\le j \le d$ write $\Phi_j$ for the composite of $\Phi$ with the
  inclusion of $M_j$ in the product.  Form the group $\widetilde{M_j}
  = M_j \times_{G_2} G_1$ as in \ref{stmt:base-change-M}.  Then by (b)
  of \emph{loc. cit.}, $\widetilde{M_j}$ is quasisimple.  Moreover,
  \emph{loc. cit.} (a) shows the kernel of $\widetilde{\Phi_j}$ it be
  central in $\widetilde{M_j}$.

  Note that the image of $\widetilde{\Phi_j}$ is mapped to the image
  of $\Phi_j$ by $f$. Now, $f$ is a separable isogeny, hence in
  particular $f$ is central; i.e.  $\ker f$ is central. It follows
  that the image of $\widetilde{\Phi_i}$ commutes with the image of
  $\widetilde{\Phi_j}$ whenever $1 \le i \ne j \le n$.  We can thus
  form the homomorphism $\widetilde{\Phi}:\prod_{j=1}^d
  \widetilde{M_j} \to G_1$ whose restriction to each $\widetilde{M_j}$
  is just $\widetilde{\Phi_j}$, and it is clear that (a)--(d) hold for
  $\widetilde{\Phi}$; this completes the proof of $(*)$.

  In view of the definition of a $D$-standard group, we may now
  suppose that $G$ is the connected centralizer $C_{H_1}(D)^o$ of a diagonalizable
  subgroup scheme $D \subset H_1 = H \times S$, where $H$ is a semisimple
  group in very good characteristic and $S$ a torus.

  We may use \cite{springer-LAG}*{8.1.5} to write $G$ as a commuting
  product of its minimal non-trivial connected, closed, normal
  subgroups $J_i$ for $i = 1,2,\dots,n$. Fix a maximal torus $T \subset
  G$, so that $T_i = (T \cap J_i)^o$ is a maximal torus of $J_i$ for
  each $i$.

  Now set $T^i = \prod_{i \ne j} T_j$; then $T^i$ is a torus in $G$.
  Moreover, $J_i$ is the derived subgroup of the reductive group $M_i
  = C_G(T_i)$.

  Now, $M_i$ is the connected centralizer in $H_1$ of the
  diagonalizable subgroup $\langle T^i,D\rangle$; thus $M_i$ is
  $D$-standard.

  Finally, putting $M = \prod_i M_i$, we have a natural surjective
  mapping $M \to G$ for which (a)-(d) hold, as required.
\end{proof}

\subsection{Existence of Springer Isomorphisms}
\label{sub:springer-iso-exists}

Let $G$ denote a $D$-standard reductive group.  We write $\NN = \NN(G)
\subset \glie$ for the \emph{nilpotent variety} of $G$ and $\UU =
\UU(G) \subset G$ for the \emph{unipotent variety} of $G$.

By a \emph{Springer isomorphism}, we mean a map 
\begin{equation*}
  \sigma:\NN \to \UU
\end{equation*}
which is a $G$-equivariant isomorphism of varieties over $K$.

The first assertion of the following Theorem -- the existence of a
Springer isomorphism -- is due essentially to Springer; see e.g.
\cite{springer-steinberg}*{III.3.12} for the case of an algebraically
closed field, or see \cite{springer-iso}.  The second assertion was
obtained by Serre and appears in the appendix to \cite{mcninch-optimal}.

\begin{theorem}[Springer, Serre]
  \begin{enumerate}
  \item There is a Springer isomorphism $\sigma:\NN \to \UU$.
  \item Any two Springer isomorphisms induce the same mapping between
    the set of $G(\Kalg)$-orbits in $\UU(\Kalg)$ and the set of
    $G(\Kalg)$-orbits in $\NN(\Kalg)$, where $\Kalg$ is an algebraic closure of $K$.
  \end{enumerate}
\end{theorem}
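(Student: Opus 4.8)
The plan is to treat the two assertions by quite different means, relying on the $D$-standard structure for existence and on a connectedness principle for uniqueness. For (1), I would first reduce to a favorable class of groups. A separable isogeny $f:G_1\to G_2$ of reductive groups has $df$ an isomorphism of Lie algebras, and it restricts to $G$-equivariant isomorphisms $\NN(G_1)\xrightarrow{\sim}\NN(G_2)$ (via $df$) and $\UU(G_1)\xrightarrow{\sim}\UU(G_2)$ (via $f$), since the central kernel meets the unipotent variety trivially; hence a Springer isomorphism for one group transports to the other. Combining this with the decomposition of a $D$-standard group afforded by the definition in \S\ref{sub:d-standard} and by \ref{stmt:D-standard-quasisimple-parts}—into a central torus, which contributes only the base points $0\in\NN$ and $1\in\UU$, together with (Weil restrictions of) geometrically quasisimple factors, across which Weil restriction transports Springer isomorphisms—I would reduce to the case in which $G$ is geometrically quasisimple and simply connected with the characteristic very good.

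For such $G$ I would fix a faithful representation $\rho:G\to\GL(V)$ whose differential is a closed immersion, so that $\rho$ carries $\UU$ into the unipotent matrices and $d\rho$ carries $\NN$ into the nilpotent elements of $\Lie(\GL(V))$. The map $\sigma$ is then built from a truncated logarithm $u\mapsto\sum_{i\ge 1}(-1)^{i-1}(u-1)^i/i$, with inverse a truncated exponential: because the characteristic is very good, the nilpotency degrees occurring in $\rho$ stay below $p$, so every denominator is invertible, the coefficients reduce into the prime field, and the two maps become mutually inverse polynomial morphisms defined over $K$ and equivariant for conjugation. This is the argument of \cite{springer-steinberg}*{3.12}. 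The hard part will not be producing a conjugation-equivariant bijection between unipotent and nilpotent matrices of $\GL(V)$, but checking that the logarithm carries $\rho(\UU(G))$ exactly onto $d\rho(\NN(G))$, so that the resulting map is intrinsic to $G$; this is precisely where very good characteristic is needed, through separability and the non-degeneracy of the trace form on $\glie$.

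For (2) I would argue by a connectedness principle. Fix $\Kalg$ and recall two inputs: in good characteristic there are only finitely many $G(\Kalg)$-orbits in $\NN(\Kalg)$, hence in $\UU(\Kalg)$; and, following Serre \cite{mcninch-optimal}*{Appendix}, the Springer isomorphisms are the points of an irreducible—in particular connected—affine variety $\mathcal{S}$ whose dimension equals the semisimple rank of $G$. The crucial elementary observation is that a Springer isomorphism preserves stabilizers exactly: since $\sigma(g\cdot X)=g\cdot\sigma(X)$ and $\sigma$ is injective, one has $g\cdot X=X$ if and only if $g\cdot\sigma(X)=\sigma(X)$, so $C_G(X)=C_G(\sigma(X))$. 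Consequently the orbit of $\sigma(X)$ has dimension $d=\dim G-\dim C_G(X)$, independent of the choice of $\sigma$.

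Now fix $X\in\NN(\Kalg)$ and consider the evaluation morphism $\mathcal{S}\to\UU$, $\sigma\mapsto\sigma(X)$. Its image is irreducible and, by the previous remark, lies in the locally closed set $\UU^{(d)}$ of unipotent elements whose orbit has dimension $d$. Because there are finitely many orbits, each $d$-dimensional orbit $O$ satisfies $\bar O\cap\UU^{(d)}=O$ and so is closed in $\UU^{(d)}$, while its complement—a finite union of the remaining $d$-dimensional orbits—is closed as well; hence $O$ is open and closed in $\UU^{(d)}$. An irreducible subset of $\UU^{(d)}$ therefore meets a single orbit, so all the points $\sigma(X)$ lie in one $G(\Kalg)$-orbit as $\sigma$ ranges over $\mathcal{S}$. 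Since $X$ was arbitrary, every Springer isomorphism induces the same bijection of orbit sets. The main obstacle in (2) is supplying the irreducibility, or at least the connectedness, of $\mathcal{S}$; granting that, the rest is the elementary orbit topology just described.
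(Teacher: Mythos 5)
Your argument for part~(1) has a fatal gap at its core construction. In very good characteristic, $p$ is \emph{not} bounded below by the nilpotency degrees occurring in a faithful representation, so the truncated logarithm is simply not defined. Concretely: $p=3$ is very good for $Sp_{2n}$ (type $C_n$, whose only bad prime is $2$), but a regular unipotent element $u$ in the natural $2n$-dimensional representation is a single Jordan block, so $(u-1)^{2n-1}\neq 0$ with $2n-1 \gg 3$; the sum $\sum_{i\ge 1}(-1)^{i-1}(u-1)^i/i$ then involves division by $p$. No choice of $\rho$ repairs this: whenever $p$ is smaller than the Coxeter number $h$, a regular unipotent element has order $p^k$ with $k\ge 2$, hence $(u-1)^p = u^p - 1 \neq 0$ in \emph{every} faithful representation, and the log/exp truncations fail to be mutually inverse (or even well defined). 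Very good characteristic excludes only the small primes dividing structural constants; it does not force $p \ge h$ (e.g.\ $p=7$ is very good for $E_8$, where $h=30$). Your attribution is also off: the argument of \cite{springer-steinberg}*{3.12} is not a logarithm. It embeds $G \subset \GL(V)$ and uses very good characteristic (non-degeneracy of the trace form) to produce an $\Ad(G)$-stable complement $\lie{m}$ with $\lie{gl}(V) = \glie \oplus \lie{m}$, and then the equivariant projection $\pi:\lie{gl}(V)\to\glie$ gives the map $u \mapsto \pi(u-1)$, with no denominators at all. The paper's own sketch takes a third route: after reducing to the semisimple very good case, it invokes normality of $\NN$ (Veldkamp/Demazure, \cite{jantzen-nil}*{8.5}) and of $\UU$ (\cite{springer-steinberg}*{III.2.7}) and then Springer's construction \cite{springer-iso}. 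Note too that your reduction is incomplete in a secondary way: a $D$-standard group is separably isogenous to $C_H(D)^o$ for $D$ of multiplicative type inside $H = H_1\times T$, so one also needs the paper's observation that Springer isomorphisms restrict to such centralizers ($\NN(C_H(D)) = \NN(H)^D$, $\UU(C_H(D)) = \UU(H)^D$); a $D$-standard group is not merely an isogeny-image of a torus times Weil restrictions of quasisimple groups.

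Part~(2), by contrast, is essentially correct and is in substance Serre's argument: Springer isomorphisms preserve centralizers exactly, the orbits of a fixed dimension are open and closed inside the locus $\UU^{(d)}$ because there are finitely many orbits, and a connected family of Springer isomorphisms therefore induces a constant map on orbit sets. The paper does not prove (2) at all -- it cites Serre's note in \cite{mcninch-optimal}*{Appendix} -- so your reconstruction is comparable in rigor, with the honest caveat (which you flag) that the whole weight rests on the unproved input that the Springer isomorphisms form a connected algebraic family on which evaluation is a morphism; that input is exactly what Serre's note supplies, so your proof of (2) is not self-contained, but it is not circular either.
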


\begin{proof}
  We sketch the argument for assertion (1) in order to point out the
  role of the $D$-standard assumption made on $G$.

  If $G$ is semisimple in very good characteristic, the nilpotent
  variety $\NN$ and the unipotent variety $\UU$ are both normal.
  Indeed, for $\UU$, one knows \cite{springer-steinberg}*{III.2.7}
  that $\UU$ is normal whenever $G$ is simply connected (with no
  condition on $p$). Moreover, one knows that the normality of $\UU$
  is preserved by separable isogeny \footnote{More precisely, if
    $\pi:G \to G_1$ is a separable central isogeny, the restriction of
    $\pi$ determines an isomorphism between $\UU(G)$ and $\UU(G_1)$.}.
  In positive characteristic the normality of $\NN$ for a semisimple
  group $G$ is a result of Veldkamp (for most $p$) and of Demazure
  when the characteristic is very good for $G$; see
  \cite{jantzen-nil}*{8.5}.  Using the normality of $\UU$ and of
  $\NN$, Springer showed that \cite{springer-iso} there is a
  $G$-equivariant isomorphism as required.

  To conclude that assertion (1) is valid for any $D$-standard groups,
  it suffices to observe the following: (i) if $\pi:G \to G_1$ is a separable
  isogeny, then there is a Springer isomorphism for $G$ if and only if
  there is a Springer isomorphism for $G_1$, and (ii) if $H$ is a
  reductive group for which there is a Springer isomorphism, and if $D
  \subset H$ is a subgroup of multiplicative type, then $C_H^o(D)$ has
  a Springer isomorphism.
\end{proof}

We note a related result for certain not-necessarily-connected
reductive groups.
\begin{stmt}
  \label{stmt:diag-centralizer-component-group}
  Let $G$ be a connected reductive group for which there is a Springer
  isomorpism $\sigma:\NN(G) \to \UU(G)$. Let $D \subset G$ be a
  subgroup of multiplicative type, and let $M = C_G(D)$.  
  \begin{enumerate}
  \item[(a)]  $\sigma$ restricts to an isomorphism $\NN(M) \to \UU(M)$.
  \item[(b)] The finite group $M(\Kalg)/M^o(\Kalg)$ has order
    invertible in $K$.
  \end{enumerate}
\end{stmt}

\begin{proof}
  Assertion (a) follows from the observations: $\NN(M) = \NN(G)^D$ and
  $\UU(M) = \UU(G)^D$. To prove (b), note that $\NN(M) = \NN(M^o)$ is
  connected, so that by (a), also $\UU(M)$ is connected. Thus $\UU(M)
  \subset M^o$ and (b) follows at once.
\end{proof}

\subsection{Smoothness of some subgroups of $D$-standard groups}
For any algebraic group, and any element $x \in G$, let $C_G(x)$
denote the centralizer subgroup scheme of $G$.  Then by definition
$\Lie C_G(x) = \lie{c}_\glie(x)$, where $\lie{c}_\glie(x)$ denotes the
centralizer of $x$ in the Lie algebra $\glie$, but since the
centralizer may not reduced, the dimension of $\lie{c}_\glie(x)$ may
be larger than the dimension $\dim C_G(x) = \dim
C_G(x)_{\operatorname{red}}$, where $C_G(x)_{\operatorname{red}}$
denotes the corresponding \emph{reduced} -- hence smooth -- group
scheme. Similar remarks hold when $x \in G$ is replaced by an element
$X \in \glie$.

When $G$ is a $D$-standard reductive group, this difficulty does not
arise. Indeed:
\begin{stmt}
  \label{stmt:smooth-centralizers}
  Let $G$ be $D$-standard, let $x \in G(K)$, and let $X \in \glie =
  \glie(K)$.  Then $C_G(x)$ and $C_G(X)$ are smooth over $K$. In other
  words,
  \begin{equation*}
    \dim C_G(x) = \dim \lie{c}_\glie(x) \quad  \text{and} \quad
    \dim C_G(X) = \dim \lie{c}_\glie(X).
  \end{equation*}
  In particular,
  \begin{equation*}
    \Lie C_G(x)_{\operatorname{red}} = \lie{c}_\glie(x) \quad \text{and} \quad
    \Lie C_G(X)_{\operatorname{red}} = \lie{c}_\glie(X).
  \end{equation*}
\end{stmt}

\begin{proof}
  When $G$ is semisimple in very good characteristic, the result
  follows from \cite{springer-steinberg}*{I.5.2 and I.5.6}. The
  extension to $D$-standard groups is immediate; the verification is
  left to the reader. \footnote{Complete details of the reduction from
    the case of a $D$-standard group to that of a semisimple group in
    very good characteristic can be given along the lines of the
    argument used in the proof of \ref{stmt:N-smooth}.}
\end{proof}

Similar assertions holds for the center of $G$, as follows:

\begin{stmt}
  \label{stmt:center-of-D-standard-is-smooth}
  Let $G$ be a $D$-standard reductive group. Then the center $\zeta_G$
  of $G$ is \emph{smooth.}
\end{stmt}

\begin{proof}
  Indeed, for any field extension $L$ of $K$, the center of $G_{/L}$
  is just the group scheme $(\zeta_G)_{/L}$ obtained by base change.
  To prove that $\zeta_G$ is smooth, it suffices to prove that
  $(\zeta_G)_{/L}$ is smooth. So we may and will suppose that $K$ is
  algebraically closed; in particular, $G$ is split.

  Fix a Borel subgroup $B$ of $G$ and fix a maximal torus $T \subset
  B$.  Let $X = \sum_\alpha X_\alpha \in \Lie(B)$ be the sum over the
  simple roots $\alpha$, where $X_\alpha \in \Lie(B)_\alpha$ is a
  non-zero root vector; then $X$ is \emph{regular nilpotent}.

  For a root $\beta \in X^*(T)$ of $T$ on $\Lie(G)$, write $\beta^\vee
  \in X_*(T)$ for the corresponding cocharacter $\beta^\vee:\Gm \to
  T$, and consider the cocharacter $\phi:\Gm \to T$ given by $\phi =
  \sum_\beta \beta^\vee \in X_*(T)$, where the sum is over all positive roots
  $\beta$. Then $\Ad(\phi(t))X = t^2X$ for each $t \in \Gm(K)$
  so that the image of $\phi$ normalizes the centralizer $C = C_G(X)$.

  Now, $C$ is a smooth subgroup of $G$ by
  \ref{stmt:smooth-centralizers}. The image of $\phi$ is a torus,
  hence is a diagonalizable group. So the fixed points
  $C^{\operatorname{im} \phi}$ of the image of $\phi$ on $C$ form a
  smooth subgroup by \ref{stmt:diag-smooth-fixed-points}.

  Finally, since $X$ is contained in the dense $B$-orbit on
  $\Lie(R_uB)$, $X$ is a \emph{distinguished} nilpotent element; cf.
  \cite{jantzen-nil}*{4.10, 4.13}.  So it follows from
  \cite{jantzen-nil}*{Prop. 5.10}, that $C^{\operatorname{im} \phi}$
  is precisely $\zeta_G$, the center of $G$.  Thus indeed $\zeta_G$ is
  smooth.
\end{proof}

\begin{rem}
  In case $G$ is semisimple in very good characteristic one can
  instead apply \cite{humphreys-conjclasses}*{0.13} to see that the
  center of the Lie algebra $\Lie(G)$ is trivial; this shows in this
  special case that $\zeta_G$ is smooth.
\end{rem}

\subsection{The centralizer of a semisimple element of $\glie$}
\label{sub:centralizer-X}

Suppose $G$ is $D$-standard, let $X \in \glie = \glie(K)$ be
semisimple, and write $M = C_G(X)$. Recall that the closed subgroup
scheme $M$ is smooth over $K$; cf.  \ref{stmt:smooth-centralizers}.

\begin{stmt}
  \label{stmt:C_G(X_s)-reductive}
  \begin{enumerate}
  \item[(a)] $X$ is tangent to a maximal torus $T$ of $G$.
  \item[(b)] $M^o$ is a reductive group.
  \end{enumerate}
\end{stmt}

\begin{proof}
  \cite{borel-LAG}*{Prop. 11.8 and Prop.  13.19}. 
\end{proof}

Now fix a maximal torus $T$ with $X \in \Lie(T)$ as in \ref{stmt:C_G(X_s)-reductive}.
Let us recall the following:
\begin{stmt}
  \label{stmt:levi-as-centralizer}
  If $S \subset G$ is a torus, there is a finite, separable field
  extension $L \supset K$ and a parabolic subgroup $P \subset G_{/L}$
  such that $C_G(S)_{/L}$ is a Levi factor of $P$.
\end{stmt}

\begin{proof}
  Let the finite separable field extension $L \supset K$ be a
  splitting field for $S$. The result then follows from
  \cite{borel-tits}*{4.15}.
\end{proof}

Suppose for the moment that the characteristic $p$ of $K$ is positive.
Let $\Ksep$ be a separable closure of $K$, and consider the (additive)
subgroup $B$ of $\Ksep$ generated by the elements $d\beta(X)$ for
$\beta \in X^*(T_{/\Ksep})$; since $d\beta(X) = 0$ whenever $\beta \in
pX^*(T_{/\Ksep})$, $B$ is a finite elementary Abelian $p$-group.
Write $\Gamma = \Gal(\Ksep/K)$ for the Galois group; since $X \in
\glie(K)$, the group $B$ is stable under the action of $\Gamma$.

Let $\mu = D(B)$ be the $K$-group scheme of multiplicative type determined
by the $\Gamma$-module $B$.  The $\Gamma$-equivariant mapping
$X^*(T_{/\Ksep}) \to B$ given by $\beta \mapsto d\beta(X)$ determines
an embedding of $\mu$ as a closed subgroup scheme of $T$.

\begin{stmt} 
  \label{stmt:C(X)=C(mu)}
  We have $M^o = C_G(\mu)^o$.
\end{stmt}

\begin{proof}[Sketch]
  Since $M^o$ and $C_G(\mu)^o$ are smooth groups over $K$, it suffices
  to give the proof after replacing $K$ by an algebraic closure.  In that case
  $\mu$ is diagonalizable.  Let $R \subset X^*(T)$ be the roots of $G$
  for the torus $T$, and for $\alpha \in R$ let $U_\alpha \subset G$
  be the corresponding root subgroup of $G$.

  Then using the Bruhat decomposition of $G$, one finds that 
  \begin{equation*}
    M^o = \langle T, U_\alpha \mid d\alpha(X) = 0\rangle = C_G(\mu)^o;
  \end{equation*}
  the required argument is essentially the same as that given in
  \cite{springer-steinberg}*{II.4.1} except that \emph{loc. cit.} does not
  treat infinitesimal subgroup schemes; cf. \cite{mcninch-diag} for
  the details.
\end{proof}

\begin{theorem}
  There is a finite separable field extension $L \supset K$ for which
  the connected centralizer $M^o_{/L}=C_G^o(X)_{/L}$ is a Levi factor of a
  parabolic subgroup of $G_{/L}$.
\end{theorem}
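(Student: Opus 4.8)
The plan is to exhibit $M^o = C_G^o(X)$ as the centralizer in $G$ of its own radical torus $S$, and then to quote \ref{stmt:levi-as-centralizer}. Set $M = C_G(X)$; by \ref{stmt:smooth-centralizers} it is smooth, so $M^o$ is a smooth connected $K$-group, reductive by \ref{stmt:C_G(X_s)-reductive}(b), and I let $S \subseteq M^o$ be its radical --- the maximal central torus --- a torus defined over $K$. The desired equality $M^o = C_G(S)$ is an equality of closed $K$-subgroup schemes, so by faithfully flat descent it may be verified after base change to $\Kalg$; I therefore pass to $\Kalg$, where $G$ is split and, by \ref{stmt:C_G(X_s)-reductive}(a), I may fix a maximal torus $T$ with $X \in \Lie(T)$. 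Since $\Ad(T)$ is trivial on $\Lie(T) \ni X$ we have $T \subseteq M^o$, so $T$ is a maximal torus of $M^o$.

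First I would record the Lie-algebra input: $\Lie(M^o) = \lie{c}_\glie(X) = \ker(\ad X)$, and $X$ itself lies in the center of this Lie algebra, since $[X,Y] = 0$ for every $Y \in \ker(\ad X)$. Then, using the root space decomposition $\Lie(M^o) = \Lie(T) \oplus \bigoplus_\alpha \glie_\alpha$ of the reductive group $M^o$, a short bracket computation shows that the Lie-algebra center $\lie{z}(\Lie(M^o))$ is contained in $\Lie(T)$ and is cut out by the equations $d\alpha = 0$; that is, $\lie{z}(\Lie(M^o)) = \{H \in \Lie(T) : d\alpha(H) = 0 \text{ for all roots } \alpha \text{ of } M^o\} = \Lie(S)$. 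In particular $X \in \Lie(S)$.

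With $X \in \Lie(S)$ in hand the two inclusions are formal. As $S$ is central in $M^o$, the group $M^o$ centralizes $S$, whence $M^o \subseteq C_G(S)$; conversely every element of $C_G(S)$ centralizes $S$, so acts trivially on $\Lie(S)$ via $\Ad$, and as $X \in \Lie(S)$ it is fixed, giving $C_G(S) \subseteq M^o$ (recall $C_G(S)$ is connected, being the centralizer of a torus in the connected reductive group $G$). Thus $M^o = C_G(S)$ over $\Kalg$, hence over $K$. Applying \ref{stmt:levi-as-centralizer} to the $K$-torus $S$ then yields a finite separable extension $L \supset K$ and a parabolic $P \subseteq G_{/L}$ with $M^o_{/L} = C_G(S)_{/L}$ a Levi factor, which is the assertion.

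I expect the crux --- and the one place where the very good (equivalently $D$-standard) hypothesis is essential beyond the cited results --- to be the identification $X \in \Lie(S)$, i.e. the equality $\lie{z}(\Lie(M^o)) = \Lie(S)$. The bracket computation in the second step relies on the root spaces $\glie_\alpha$ being one-dimensional and, crucially, on $d\alpha \neq 0$ for each root $\alpha$ of $M^o$ (equivalently, no root being $p$-divisible in $X^*(T)$); in bad characteristic a vanishing $d\alpha$ would let a root-space direction enter $\lie{z}(\Lie(M^o))$ and enlarge it beyond $\Lie(S)$, breaking the inclusion $C_G(S) \subseteq M^o$. One could instead route the argument through \ref{stmt:C(X)=C(mu)} and the subgroup $\mu$ of multiplicative type, but there the same phenomenon resurfaces as the requirement that $\{\alpha : d\alpha(X) = 0\}$ be a Levi, and not merely a pseudo-Levi, subsystem --- so the torus route above seems the most economical.
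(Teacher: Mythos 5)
Your skeleton --- realize $M^o$ as $C_G(S)$ for a central torus $S$, deduce the theorem from \ref{stmt:levi-as-centralizer} --- is the same as the paper's, and the formal steps (smoothness of $M$ via \ref{stmt:smooth-centralizers}, reduction to $\Kalg$, the two inclusions once $X \in \Lie(S)$ is known) are fine. The gap is the one step you yourself call the crux: the equality $\lie{z}(\Lie(M^o)) = \Lie(S)$ is asserted, not proved, and nothing you cite supplies it. Your bracket computation reduces it to two properties of the \emph{a priori unknown} group $M^o$: (i) no root of $M^o$ is $p$-divisible in $X^*(T)$, and (ii) $\{H \in \Lie(T) : d\alpha(H)=0 \ \forall \alpha \in R_{M^o}\} = \Lie(S)$, which amounts to saying that $X^*(T)/\Z R_{M^o}$ has no $p$-torsion, where $R_{M^o} = \{\alpha : d\alpha(X)=0\}$. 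Item (ii) is essentially the assertion that $R_{M^o}$ is a Levi subsystem rather than a pseudo-Levi one --- i.e.\ it is essentially the theorem being proved. Neither \ref{stmt:smooth-centralizers} nor \ref{stmt:C_G(X_s)-reductive} says anything about the root datum of $M^o$, and ``$p$ very good for $G$'' does not transfer to (ii) without a genuine argument (torsion-prime considerations \`a la Steinberg would have to enter). Your closing paragraph concedes that the ``Levi, not pseudo-Levi'' requirement is lurking, but the torus route does not avoid it --- it hits exactly the same wall, relabelled --- so as written the argument is circular at its only substantive point.

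The paper's proof is built to sidestep precisely this. It never touches the Lie-algebra center $\lie{z}(\Lie(M^o))$, which for a general reductive group can genuinely exceed the Lie algebra of the radical (e.g.\ $\lie{sl}_p$ in characteristic $p$: $\lie{z}$ is the scalars, the radical is trivial); instead it uses the \emph{scheme-theoretic} center $Z = Z(M^o)$, whose Lie algebra is the fixed-point scheme $\Lie(M^o)^{\Ad(M^o)}$ by \ref{stmt:Lie(Z)-as-fixed-points}. Two prepared inputs then do all the work: first, $M^o = C_G(\mu)^o$ for a subgroup scheme $\mu \subset T$ of multiplicative type (\ref{stmt:C(X)=C(mu)}), so that $M^o$ is itself $D$-standard; second, the center of a $D$-standard group is smooth (\ref{stmt:center-of-D-standard-is-smooth}), proved there by a regular-nilpotent-element argument rather than root combinatorics, so no circularity arises. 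Smoothness gives $\Lie(Z) = \Lie(S)$ for $S$ the maximal torus of $Z$, and $X \in \Lie(Z)$ is then automatic, because every $\Lambda$-point of $C_G(X)$ fixes $X$ under $\Ad$ \emph{by the very definition of the centralizer scheme} --- no computation with roots is needed. Note also that your dismissal of the ``$\mu$-route'' misreads it: the paper's use of $\mu$ never requires knowing that $\{\alpha : d\alpha(X)=0\}$ is a Levi subsystem. The minimal repair of your write-up is to replace $\lie{z}(\Lie(M^o))$ by $\Lie(Z(M^o))$ throughout (which still contains $X$, for the scheme-theoretic reason just given) and to justify smoothness of $Z(M^o)$ via the paper's two inputs --- at which point you have reproduced the paper's proof.
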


\begin{proof}
  Suppose first that $K$ has characteristic $p>0$.  In view of
  \ref{stmt:C(X)=C(mu)}, the reductive group $M^o$ is $D$-standard,
  since $\mu$ is a group of multiplicative type.  According to
  \ref{stmt:center-of-D-standard-is-smooth}, the center $Z$ of $M^o$
  is smooth. Let $S$ be a maximal torus of $Z$.  We have evidently
  $M^o \subset C_G(S)$.  It follows that
  $\Lie(Z) = \Lie(S)$.  We may now use \ref{stmt:Lie(Z)-as-fixed-points}
  to see that $X \in \Lie(Z) = \Lie(S)$. Thus $M^o \supset C_G(S)$.

  It follows that $M^o = C_G(S)$, and we conclude via
  \ref{stmt:levi-as-centralizer}.

  The situation when $K$ has characteristic zero is simpler.  In that
  case, the center $Z$ of the reductive group $M^o$ is automatically
  smooth. If $S$ is a maximal torus of $Z$ then $M^o= C_G(S)$ as
  before.
\end{proof}

\subsection{Borel subalgebras}
\label{sub:borel-subalg}

Suppose that $K$ is algebraically closed. By  a Borel
subalgebra   of $\glie$, we mean
the Lie algebra $\blie = \Lie(B)$ of a Borel subgroup  $B \subset G$.
\begin{prop}[\cite{borel-LAG}*{14.25}]
  $\glie$ is the union of its Borel subalgebras. More precisely, for
  each $X \in \glie$, there is a Borel subalgebra $\blie$ with $X \in
  \blie$.
\end{prop}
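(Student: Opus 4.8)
The plan is to reduce, via the additive Jordan decomposition in $\glie$, to the separate tasks of accommodating a semisimple element and a nilpotent element inside one common Borel subalgebra. Write $X = X_s + X_n$ with $X_s$ semisimple, $X_n$ nilpotent, and $[X_s,X_n]=0$. The idea is to produce a connected reductive subgroup $M \subset G$ whose Lie algebra contains both $X_s$ and $X_n$, to lodge $X$ in a Borel subalgebra of $\Lie(M)$, and then to propagate that Borel up to $G$ using that $M$ is a Levi subgroup.

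First I would dispose of the semisimple part. By \ref{stmt:C_G(X_s)-reductive}, $X_s$ is tangent to a maximal torus of $G$ and the connected centralizer $M := C_G(X_s)^o$ is reductive. Since $M$ fixes $X_s$ under the adjoint action, differentiation gives $[\Lie(M),X_s]=0$, so $X_s$ is a \emph{central} element of the reductive Lie algebra $\Lie(M)$; in particular $X_s$ lies in \emph{every} Borel subalgebra of $\Lie(M)$. On the other hand, because $X_n$ commutes with $X_s$ it lies in $\lie{c}_\glie(X_s)$, which equals $\Lie(M)$ by the smoothness of centralizers \ref{stmt:smooth-centralizers}; moreover $X_n$ remains nilpotent when viewed in $\Lie(M)$. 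Thus the whole problem collapses to the nilpotent case inside $M$: it suffices to exhibit a Borel subgroup $B_M$ of $M$ with $X_n \in \Lie(R_u(B_M))$, for then $X_s + X_n \in \Lie(B_M)$.

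The nilpotent case is the crux. Here I would invoke a Springer isomorphism $\sigma$ for $M$ (available because $M$ is $D$-standard reductive, being the connected centralizer of a subgroup of multiplicative type --- a torus in characteristic zero, or the infinitesimal group $\mu$ of \ref{stmt:C(X)=C(mu)} in positive characteristic --- so that \ref{stmt:diag-centralizer-component-group} applies), and set $u = \sigma(X_n) \in \UU(M)$. Every unipotent element of a connected group lies in a Borel subgroup, and since a maximal torus carries no nontrivial unipotent element, such a $u$ in fact lies in $R_u(B_M)$ for some Borel $B_M$ of $M$. To transfer this back to the Lie algebra I would exploit the $M$-equivariance of $\sigma$: choosing a cocharacter $\lambda$ of $M$ with $R_u(B_M)$ equal to the strictly-positive-weight part, one has $X_n \in \Lie(R_u(B_M))$ exactly when $\lim_{t\to 0}\Ad(\lambda(t))X_n = 0$, while $u \in R_u(B_M)$ exactly when $\lim_{t\to 0}\lambda(t)\,u\,\lambda(t)^{-1}=1$, and equivariance of $\sigma$ under $\lambda(\Gm)$ matches these two conditions. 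Hence $u \in R_u(B_M)$ forces $X_n \in \Lie(R_u(B_M))$. The main obstacle I expect is precisely this passage from the group to the Lie algebra --- i.e. checking that $\sigma$ carries $\Lie(R_u(B_M))$ onto $R_u(B_M)$ --- together with the classical input that unipotent elements lie in Borel subgroups; everything else is bookkeeping.

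Finally I would lift the result to $G$. As in the proof of the preceding theorem on centralizers of semisimple elements, $M = C_G(S)$ where $S$ is a maximal torus of the center of $M$, so by \ref{stmt:levi-as-centralizer} (applied over the algebraically closed field $K$) the group $M$ is a Levi factor of a parabolic $P = M \cdot R_u(P)$ of $G$. Then $B := B_M \cdot R_u(P)$ is a Borel subgroup of $G$ with $\Lie(B) = \Lie(B_M) \oplus \Lie(R_u(P))$, whence $X = X_s + X_n \in \Lie(B_M) \subseteq \Lie(B)$, as required.
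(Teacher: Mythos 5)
The paper does not actually prove this proposition: it is quoted outright from [Bor~91, 14.25], where it is established for an \emph{arbitrary} connected algebraic group over an algebraically closed field, in any characteristic, by an elementary argument --- the union of the Borel subalgebras is the image in $\glie$ of the closed incidence subvariety $\{(Y,gB) \mid Y \in \Ad(g)\Lie(B)\} \subset \glie \times G/B$, hence closed by completeness of the flag variety, and it contains a dense subset of $\glie$ by the density theorem for Cartan subalgebras. No Jordan decomposition, no smoothness of centralizers, and no Springer isomorphism enter. Your argument is therefore a genuinely different route, and its steps are essentially sound where they apply: the reduction to the nilpotent case inside $M = C_G(X_s)^o$; the transfer of ``$u \in R_u(B_M)$'' to ``$X_n \in \Lie(R_u B_M)$'' by matching limit conditions along a regular dominant cocharacter (correct, provided you make explicit that $\sigma(0)=1$ --- true because $\sigma(0)$ is a $G$-fixed, hence central, hence trivial unipotent element --- and that limits transfer through the inverse morphism $\sigma^{-1}$); and the passage back to $G$ via $B = B_M \cdot R_u(P)$ for the parabolic $P$ with Levi factor $M$.

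Two caveats, one of them substantive. First, every ingredient you invoke --- \ref{stmt:smooth-centralizers}, \ref{stmt:C_G(X_s)-reductive}, \ref{stmt:C(X)=C(mu)}, the existence of a Springer isomorphism for $M$, and the Theorem of \S\ref{sub:centralizer-X} --- carries the hypothesis that $G$ is $D$-standard, which you never state. The proposition, however, is asserted (and cited) under the section's standing hypothesis of a connected reductive group over an algebraically closed field, with no restriction on the characteristic: for $\operatorname{PGL}_p$ in characteristic $p$ or $\operatorname{Sp}_{2n}$ in characteristic $2$ your proof does not get started, since centralizers there need not be smooth and the quoted results are unavailable. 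So you prove strictly less than the statement; and since the heavy inputs you use (normality of $\NN$, Springer's isomorphism) themselves rest, in the literature, on foundational facts of exactly this kind, the economy is backwards even where the argument works --- which is presumably why the paper simply cites Borel. Second, a smaller and fixable gap: you assert that a central element of $\Lie(M)$ lies in \emph{every} Borel subalgebra of $\Lie(M)$. In positive characteristic this is not obvious (the centralizer of $\Lie(T)$ in $\Lie(M)$ can be strictly larger than $\Lie(T)$), so it needs an argument. The cheapest one in your setting: by \ref{stmt:C_G(X_s)-reductive}(a), $X_s \in \Lie(T)$ for a maximal torus $T$ of $G$, and then $T \subset M$ is a maximal torus of $M$; since $X_s$ is fixed by $\Ad(M)$ (as $M \subset C_G(X_s)$, the scheme-theoretic centralizer), conjugacy of maximal tori in $M$ gives $X_s = \Ad(m)X_s \in \Lie(mTm^{-1})$ for every $m \in M(K)$, so $X_s$ lies in the Lie algebra of every maximal torus of $M$, in particular of one contained in your $B_M$.
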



\section{The center of a centralizer}
\label{section:center-of-nil-centralizer}

For a $D$-standard reductive group $G$ over $K$, let $x \in G(K)$ and
$X \in \glie(K)$.  We are going to consider the centralizers $C_G(X)$
and $C_G(x)$, and in particular, the centers $Z_x = Z(C_G(x))$ and
$Z_X = Z(C_G(X))$ of these centralizers.  As we have seen, $Z_x$ is a
closed subscheme of $C_G(x)$ and $Z_X$ is a closed subscheme of
$C_G(X)$. In this section, we will prove Theorem
\ref{smoothness-theorem} from the introduction; namely, in
\S\ref{sub:smoothness-of-center-of-centralizer}, we prove that $Z_x$
and $Z_X$ are smooth.  In \S\ref{sub:unipotent case}, we establish
some preliminary results under the assumption that $K$ is perfect.
Since the smoothness of $Z_x$ and of $Z_X$ will follow if it is proved
after base change with an algebraic closure $\Kalg$ of $K$, this
assumption on $K$ is harmless for our needs.

\subsection{Unipotence of the center of the centralizer 
  when $X$ is nilpotent}
\label{sub:unipotent case}

Suppose in this section that the field $K$ is \emph{perfect};
thus if $A$ is a group scheme over $K$, we may speak of the
reduced subgroup scheme $A_\red$ -- cf. \ref{sttm:reduced}.
We begin with the following observation which is due independently
to R. Proud and G. Seitz. For completeness, we include a proof.
\begin{stmt}
  \label{stmt:unipotence-of-center}
  Let $x$ be unipotent, let $X$ be nilpotent, write $C$ for one of
  the groups $C_G(x)$ or $C_G(X)$, and write $Z = Z(C)$; thus
$Z$ is one of the groups $Z_x$ or $Z_X$.
  \begin{enumerate}
  \item[(a)] $C^o$ is not contained in a Levi factor of a proper parabolic
    subgroup of $G$. 
  \item[(b)]   The quotient $(Z_\red)^o/(\zeta_G)^o$ is a unipotent group, where
    $Z_\red$ is the corresponding reduced group, and $(Z_\red)^o$ is its
    identity component.
  \item[(c)] Let $Y \in \Lie(Z)$ be semisimple. Then $Y \in \Lie(\zeta_G)$.
  \end{enumerate}
\end{stmt}

\begin{proof}
  It suffices to prove each of the assertions after extending scalars;
  thus, we may and will suppose in the proof that $K$ is algebraically
  closed.  Moreover, if $\sigma:\NN \to \UU$ is a Springer
  isomorphism, then $C_G(X) = C_G(\sigma(X))$. Thus it suffices to
  give the proof for the centralizer of $X$.

  We first prove (a). Suppose that $L$ is a Levi factor of a parabolic
  subgroup $P$, and assume that $C^o$ is a subgroup scheme of $L$.  Then
  $C^o = C_L^o(X)$ so that $\Lie C = \Lie C_L(X)$. Since $L$ is again a
  $D$-standard reductive group, we see by the smoothness of
  centralizers that $\Lie C_L(X)$ is the centralizer in $\Lie L$ of
  $X$ \ref{stmt:smooth-centralizers}; in particular, it follows that
  every fixed point of $\ad(X)$ on $\Lie(G)$ lies in $\Lie(L)$. If $L$
  were a proper subgroup of $G$, the nilpotent operator $\ad(X)$ would
  have a non-zero fixed point on $\Lie R_uP$; it follows that $L = G$.

  We will now deduce (b) and (c) from (a). For (b), let $S \subset Z$
  be a torus. The assertion (b) will follow if we prove that $S$ is
  central in $G$.  But $L = C_G(S)$ is a Levi factor of some parabolic
  subgroup $P$ of $G$ by \ref{stmt:levi-as-centralizer}, and $C^o
  \subset L$.  Thus by (a) we have $P = G = L$; this shows that $S$ is
  central in $G$, as required.

  For (c), let $Y \in \Lie(Z)$ be semisimple. According to Theorem
  \ref{sub:centralizer-X}, $L = C_G^o(Y)$ is a Levi factor of some
  parabolic subgroup $P$, and $C^o \subset L$. So again (a) shows that
  $P=G = L$. Since $C_G(Y) = G$, it follows that $Y$ is a fixed point
  for the adjoint action of $G$ on $\Lie(G)$.  But according to
  \ref{stmt:Lie(Z)-as-fixed-points}, we have $\Lie(\zeta_G) =
  \Lie(G)^{\Ad(G)}$; thus indeed $Y \in \Lie(\zeta_G)$ as required.
\end{proof}

As a consequence, we deduce the following structural results:
\begin{stmt}
  \label{stmt:Z-structure}
  With notation and assumptions as in \ref{stmt:unipotence-of-center},
  we have:
  \begin{enumerate}
  \item[(a)] $Z_\red$ is the internal direct product $\zeta_G \cdot
    R_u Z_\red$.
  \item[(b)] The set of nilpotent elements of $\Lie(Z)$ forms a
    subalgebra $\ulie$ for which
    \begin{equation*}
      \Lie Z = \Lie(\zeta_G) \oplus      \ulie.
    \end{equation*}
  \end{enumerate}
\end{stmt}

\begin{proof}
  Note that $Z$ and also $\Lie(Z)$ are commutative; since the product
  of two commuting unipotent elements of $G$ is unipotent and the sum
  of two commuting nilpotent elements of $\Lie(G)$ is nilpotent,
  results (a) and (b) follow from \ref{stmt:unipotence-of-center}(b)
  and (c).
\end{proof}

\subsection{Smoothness of the center of the centralizer}
\label{sub:smoothness-of-center-of-centralizer}
In this section, $K$ is again arbitrary.
Let $x \in G(K)$, $X \in \glie(K)$ be arbitrary, write $C$
for one of the groups $C_G(x)$ or $C_G(X)$, and write $Z = Z(C)$,
so that $Z$ is one of the groups $Z_x$ or $Z_X$.
We are now ready to prove the following:
\begin{theorem}
  The center $Z = Z(C)$ is a smooth group scheme over $K$.
\end{theorem}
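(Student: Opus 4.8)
The plan is to reduce to the case of a nilpotent element in a connected group over an algebraically closed field, and then to exploit a Springer isomorphism to force the crucial dimension count. Since $Z$ is smooth over $K$ if and only if $Z_{/\Kalg}$ is reduced (\ref{stmt:smoothness}(a)) and the formation of $Z$ commutes with the field extension, I may assume $K=\Kalg$; in particular $K$ is perfect, so the results of \S\ref{sub:unipotent case} apply. Writing the Jordan decomposition $x=su$ (resp.\ $X=X_s+X_n$), set $M=C_G(s)$ (resp.\ $M=C_G(X_s)$). By \ref{stmt:C_G(X_s)-reductive} and \ref{stmt:C(X)=C(mu)} (and the evident group analogue), the identity component $M^o$ is a connected reductive $D$-standard group, and $C_G(x)=C_M(u)$ (resp.\ $C_G(X)=C_M(X_n)$) as group schemes. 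The unipotent part $u$ lies in $M^o$, and $C_{M^o}(u)$ is normal in $C_M(u)$ with quotient a subgroup of $M/M^o$, which is finite \'etale of order invertible in $K$ by \ref{stmt:diag-centralizer-component-group}(b). Granting smoothness of the center of $C_{M^o}(u)$, \ref{sub:center-in-disconnected} then yields smoothness of $Z=Z(C_M(u))$. Thus it suffices to treat the case where $G$ is connected, reductive and $D$-standard, $K=\Kalg$, and $C=C_G(X)$ for a \emph{nilpotent} $X$ (the unipotent case is identical, and in any event $C_G(X)=C_G(\sigma(X))$).

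In this core case, by \ref{stmt:Z-structure} we have $Z_\red=\zeta_G\cdot R_uZ_\red$ and $\Lie Z=\Lie(\zeta_G)\oplus\ulie$, where $\ulie$ is the (linear) space of nilpotent elements of $\Lie Z$; moreover $\zeta_G$ is smooth (\ref{stmt:center-of-D-standard-is-smooth}). Since $\dim Z=\dim Z_\red=\dim\zeta_G+\dim R_uZ_\red$ and $\dim\Lie Z=\dim\Lie(\zeta_G)+\dim\ulie=\dim\zeta_G+\dim\ulie$, the smoothness criterion of \ref{stmt:smoothness} reduces the theorem to the single equality $\dim\ulie=\dim R_uZ_\red$. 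One inequality is automatic: $R_uZ_\red$ is a smooth unipotent subgroup of $Z$, so $\Lie(R_uZ_\red)\subseteq\ulie$ and hence $\dim R_uZ_\red=\dim\Lie(R_uZ_\red)\le\dim\ulie$.

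For the reverse inequality I would use a Springer isomorphism $\sigma\colon\NN\to\UU$ to map $\ulie$ into $R_uZ_\red$. Fix $Y\in\ulie\subseteq\Lie Z$. By \ref{stmt:Lie(Z)-as-fixed-points} and \ref{stmt:alg-closed-fixed-points} (valid since $C$ is smooth, \ref{stmt:smooth-centralizers}), $Y$ is fixed by $\Ad(C(K))$, so $C\subseteq C_G(Y)$ as schemes (both are smooth and $K=\Kalg$); by $G$-equivariance of $\sigma$ this gives $C\subseteq C_G(Y)=C_G(\sigma(Y))$, so $\sigma(Y)$ commutes with $C$. Furthermore $X\in\Lie C\subseteq\Lie C_G(\sigma(Y))=\glie^{\Ad(\sigma(Y))}$, whence $\Ad(\sigma(Y))X=X$, i.e.\ $\sigma(Y)\in C_G(X)=C$. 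Therefore $\sigma(Y)\in Z(K)$, and being unipotent it lies in $R_uZ_\red$ (the center $\zeta_G$ of the reductive group $G$ contains no nontrivial unipotent element). Thus $\sigma$ carries the closed subvariety $\ulie\subseteq\NN$ isomorphically onto a closed subvariety of $R_uZ_\red$, so $\dim\ulie=\dim\sigma(\ulie)\le\dim R_uZ_\red$. Together with the previous paragraph this gives $\dim\ulie=\dim R_uZ_\red$, and the theorem follows.

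The main obstacle is precisely this last dimension comparison, and it is here that the existence of a Springer isomorphism is essential: $\sigma$ converts the purely Lie-theoretic object $\ulie$ (the nilpotent elements of $\Lie Z$) into a genuine unipotent subvariety sitting inside $Z_\red$, thereby bounding $\dim\ulie$ above by $\dim R_uZ_\red$. The remaining points demanding care are the verification that the Jordan-reduction centralizers $M^o$ are again $D$-standard and that the component groups $M/M^o$ have order invertible in $K$, so that the passage from the connected core case to the general, possibly disconnected, case through \ref{sub:center-in-disconnected} is legitimate.
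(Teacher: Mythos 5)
Your proposal is correct and follows essentially the same route as the paper's own proof: the same reduction via Jordan decomposition and \ref{sub:center-in-disconnected} to the case of a nilpotent element over $\Kalg$, the same appeal to \ref{stmt:Z-structure} and the smoothness of $\zeta_G$, and the same key step of using a Springer isomorphism to carry $\ulie$ into $R_uZ_\red$ and compare dimensions. If anything, your explicit verification that $\sigma(Y)$ lies in $C$ itself (via $X \in \Lie C \subseteq \Lie C_G(\sigma(Y))$, hence $\Ad(\sigma(Y))X = X$) spells out a step the paper treats somewhat tersely.
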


\begin{proof}
  Since a group scheme is smooth over $K$ if and only if it is smooth
  upon scalar extension, we may and will suppose $K$ to be
  algebraically closed (hence in particular perfect). So as in
  \S\ref{sub:unipotent case}, we may speak of the reduced subgroup
  scheme $A_\red$ of a group scheme $A$ over $K$.

  Let $x = x_sx_u$ and $X = X_s + X_n$ be the Jordan decompositions of
  the elements; thus $x_s \in G$ and $X_s \in \glie$ are semisimple,
  $x_u \in G$ is unipotent, $X_n \in \glie$ is nilpotent, and we have:
  $x_s x_u = x_u x_s$ and $[X_s,X_n] = 0$.

  Then
  \begin{equation*}
    C_G(x) = C_M(x_u) \quad \text{and} \quad C_G(X) = C_M(X_n)
  \end{equation*}
  where $M = C_G(x_s)$ resp. $M = C_G(X_s)$.  

  Now, the Zariski closure of the group generated by $x_s$ is a smooth
  diagonalizable group whose centralizer coincides with $C_G(x_s)$.
  And according to \S \ref{sub:centralizer-X} the centralizer of $X_s$
  is reductive and is the centralizer of a (non-smooth) diagonalizable
  group scheme. Thus in both cases, the connected component of $M$ is
  itself a $D$-standard reductive group. 

  Moreover,   \ref{stmt:diag-centralizer-component-group} shows that $x_u$ is a
  $K$-point of $M^o$.  There is an exact sequence
  \begin{equation*} 
    1 \to C_{M^o}(x_u) \to C_M(x_u) \to E \to 1
  \end{equation*}
  resp.
  \begin{equation*} 
    1 \to C_{M^o}(X_N) \to C_M(X_N) \to E' \to 1
  \end{equation*}
  for a suitable subgroup $E$ resp. $E'$ of $M/M^o$.  Since $M/M^o$
  has order invertible in $K$
  \ref{stmt:diag-centralizer-component-group},
  apply \ref{sub:center-in-disconnected} to see that the smoothness of $Z$
  follows from the smoothness of the center of $C_{M^o}(x_u)$ resp.
  $C_{M^o}(X_n)$; thus the proof of the theorem is reduced to the case
  where $x$ is unipotent and $X$ is nilpotent.  Since in that case
  $C_G(X) = C_G(\sigma(X))$ where $\sigma:\NN \to \UU$ is a Springer
  isomorphism, we only discuss the centralizer of a nilpotent element
  $X \in \glie$.

  We must argue that $\dim Z = \dim \Lie Z$. Since it is a general
  fact that $\dim \Lie Z \ge \dim Z$, it suffices to show the
  following:
  \begin{equation*}
    (*) \quad \dim \Lie Z \le \dim Z.
  \end{equation*}

  By \ref{stmt:Z-structure} we have $\Lie Z = \Lie(\zeta_G) \oplus
  \ulie$ where $\ulie$ is the set of all nilpotent $Y \in \Lie Z$.
  According to \ref{stmt:center-of-D-standard-is-smooth}, the
  center $\zeta_G$ of $G$ is smooth. Thus $\dim \zeta_G = \dim \Lie
  \zeta_G$.  In view of \ref{stmt:Z-structure}, the assertion $(*)$
  will follow if we prove that
  \begin{equation*}
    (**) \quad \dim \ulie \le \dim R_uZ_\red.
  \end{equation*}

  In order to prove $(**)$, we fix a Springer isomorphism $\sigma:\NN
  \to \UU$ -- see Theorem \ref{sub:springer-iso-exists} --, and we
  consider the restriction of $\sigma$ to $\ulie$.
  
  We first argue that $\sigma$ maps $\ulie $ to $R_u Z_\red$.  Since
  $\ulie$ is smooth -- hence reduced -- and since $K$ is algebraically
  closed, it suffices to show that $\sigma$ maps the $K$-points of
  $\ulie$ to $R_u Z_\red$.  Fix $Y \in \ulie(K)$.
  
  If $g \in C_G(X)(K)$, the inner automorphism $\Int(g)$ of $C$ is
  trivial on $Z$; thus, the automorphism $\Ad(g)$ of $\Lie C$ is
  trivial on $\Lie Z$. It follows that
  \begin{equation*}
    g\sigma(Y)g^{-1} = \sigma(\Ad(g)Y) = \sigma(Y).
  \end{equation*}
  Since $K$ is algebraically closed, it now follows from
  \ref{stmt:alg-closed-fixed-points} that 
  \begin{equation*}
    \sigma(Y) \in Z(K) = C_G(X)^{\Int(C_G(X))}(K).
  \end{equation*}
  
  Since $\ulie$ is reduced, one knows $\sigma(Y) \in Z_\red(K)$.
  Since $\sigma(Y)$ is unipotent, it follows that $\sigma(Y) \in
  R_uZ_\red(K)$.

  Thus the restriction of the Springer isomorphism $\sigma$ gives a
  morphism $\sigma_{\mid \ulie}:\ulie \to R_uZ_\red$.  Since $\sigma$
  is a closed morphism, it follows that the image of $\sigma_{\mid
    \ulie}$ is a closed subvariety of $R_uZ_\red$ whose dimension is
  $\dim \ulie$, so that indeed $(**)$ holds.

\end{proof}

With notation as in the preceding proof, we point out a slightly
different argument. Namely, reasoning as above, one can show that the
inverse isomorphism $\tau = \sigma^{-1}:\UU \to \NN$ maps $R_uZ_\red$
to $\ulie$. It follows that $R_uZ_\red$ and $\ulie$ are isomorphic
varieties, hence they have the same dimension.

Note that we have now proved Theorem \ref{smoothness-theorem} from
the introduction.

\section{Regular nilpotent elements}
\label{section:regular}

In this section, we are going to prove Theorems
\ref{N-mod-C-description}, \ref{N-mod-C-weights}, and
\ref{dsigma-multiple-of-identity} from the introduction.  We denote by
$G$ a $D$-standard reductive group over the field $K$. Let $T \subset
G$ be a maximal torus, and let $T_0 \subset T$ where $T_0$ is a
maximal torus of the derived group $G' = (G,G)$ of $G$. Let us write
$r = \dim T_0$ for the semisimple rank of $G$. Finally, let $W =
N_G(T)/T \simeq N_{G'}(T_0)/T_0$ be the corresponding Weyl group.

\subsection{Degrees and exponents}
\label{sub:degrees and exponents}
We give here a quick description of some well-known numerical
invariants associated with the Weyl group $W$. \textsl{We suppose that
  the derived group $G'$ of $G$ is quasi-simple, and we suppose that
  $T$ (and hence $G$) is split over $K$}.

Let $V = X^*(T_0) \tensor_\Z \Q$ and note that the action of the Weyl
group $W$ on $T_0$ determines a linear representation $(\rho,V)$ of
$W$.  The algebra of polynomials (regular functions) on $V$ may be
graded by assigning the degree 1 to each element of the dual space
$V^\vee \subset \Q[V]$. The action via $\rho$ of $W$ on $V$ determines
an action of $W$ on $\Q[V]$ by algebra automorphisms, and it is known that
the algebra $\Q[V]^W$ of $W$-invariant polynomials on $V$ is generated
as a $\Q$-algebra by $r$ algebraically independent
homogeneous elements of positive degree \cite{bourbaki}*{V.5.3 Theorem 3}.
The \emph{degrees} of $W$ are the degrees
$d_1,d_2,\dots,d_r$ of a system of homogeneous generators
for $\Q[V]^W$. The degrees depend
-- up to order -- only on $W$; see \cite{bourbaki}*{V.5.1}.
The \emph{exponents of $W$} are the numbers $k_1,k_2,\dots,k_r$ where
$k_i = d_i - 1$ for $1 \le i \le r$.

Recall that the ``exponents'' earn their name as follows.  Let $c \in
W$ be a Coxeter element \cite{bourbaki}*{V.6.1}, and write $h$ for the
order of $c$.  If $E$ is a field of characteristic 0 containing a
primitive $h$-th root of unity $\varpi \in E^\times$, then
\cite{bourbaki}*{V.6.2 Prop. 3} the eigenvalues of $\rho(c)$ on $V
\tensor_\Q E$ are the values
\begin{equation*}
  \varpi^{k_1},     \varpi^{k_2} ,  \cdots   ,\varpi^{k_r}.
\end{equation*}

The exponents and degrees are known explicitly; cf.
\cite{bourbaki}*{Plate I -- IX}.

\subsection{The centralizer of a regular nilpotent element}
\label{sub:regular-centralizer}
In this section, $G$ is again a $D$-standard reductive group (whose
derived group is not required to be quasisimple) which we assume to be
quasisplit over $K$.

If $\phi:\Gm \to G$ is a cocharacter and $i \in \Z$, we write
$\glie(\phi;i)$ for the $i$-weight space of the action of $\phi(\Gm)$
on $\glie$ under the adjoint action of $\phi(\Gm)$; thus
\begin{equation*}
  \glie(\phi;i) = \{Y \in \glie \mid \Ad(\phi(t))Y = t^i Y \quad
  \forall t \in \Kalg^\times\}.
\end{equation*}
Any cocharacter $\phi$ determines a unique parabolic subgroup
$P=P(\phi)$ whose $\Kalg$ points are given by:
\begin{equation*}
  P(\Kalg) = \{g \in G(\Kalg) \mid \lim_{t \to 0} \Int(\phi(t))g \ \text{exists}\}.
\end{equation*}
One knows that $\plie = \Lie(P) = \sum_{i \ge 0} \glie(\phi;i)$.

Let $X \in \glie(K)$ be nilpotent. Following
\cite{jantzen-nil}*{\S5.3}, we say that a cocharacter $\psi:\Gm \to G$
is said to be \emph{associated to} a nilpotent element $X$ in case (i)
$X \in \glie(\psi;2)$, and (ii) there is a maximal torus $S$ of the
centralizer $C_G(X)$ such that the image of $\psi$ lies in $(L,L)$,
where $L = C_G(S)$.
\begin{stmt}
  \label{stmt:associated-cochar}
  \begin{enumerate}
  \item[(a)] There are cocharacters associated to $X$.
  \item[(b)] If $\phi$ and $\phi'$ are cocharacters associated to $X$,
    then $P(\phi) = P(\phi')$.
  \item[(c)] The centralizer $C_G(X)$ is contained in $P = P(\phi)$
    for a cocharacter $\phi$ associated to $X$.
  \item[(d)] The unipotent radical $R$ of $C_G(X)_{/\Kalg}$ is defined
    over $K$ and is a $K$-split unipotent group.
  \item[(e)]  Any two cocharacters
associated to $X$ are conjugate by a unique element of $R(K)$.
  \end{enumerate}
\end{stmt}

\begin{proof}
  In the geometric setting, these assertions may be found in
  \cite{jantzen-nil}; the existence of an associated cocharacter is an
  essential part of the Bala-Carter, a conceptual proof of which may
  be found \cite{premet}. Over the ground field $K$, (a) and (c)
  follow from \cite{mcninch-rat}*{Theorem 26 and Theorem 28}.  (b)
  follows since associated cocharacters are optimal for the unstable
  vector $X$ in the sense of Kempf; see \cite{premet}.  Finally, (d)
  and (e) follow from \cite{mcninch-optimal}*{Prop/Defn 21}.
\end{proof}

Finally, recall that a nilpotent element $X \in \glie$ is \emph{distinguished}
provided that a maximal torus of the centralizer $C_G(X)$ is central in $G$.

\begin{stmt}
  \label{stmt:characterize-regular-nilpotent} 
  Let $X \in \glie$ be nilpotent. The following are equivalent:
  \begin{enumerate}
  \item[(a)] $X$ is regular -- i.e. $\dim C_G(X)$ is equal to the rank of
    $G$.
  \item[(b)] $X \in \Lie(B)$ for precisely one Borel subgroup of $G$.
  \end{enumerate}
  Moreover, if $X$ is regular then $X$ is distinguished, and if $\phi$ is
  a cocharacter associated with $X$, then $B=P(\phi)$ is the unique
  Borel subgroup with $X \in \Lie(B)$.
\end{stmt}

\begin{proof}
  The equivalence of (a) and (b) can be found in
  \cite{jantzen-nil}*{Cor. 6.8}. Note that in \emph{loc. cit.} it is
  assumed that $K$ is algebraically closed. But, it suffices to prove
  that (b) implies (a) after replacing $K$ by an extension field. It
  remains to argue that (a) implies (b). But given (a), one knows
  there to be a unique Borel subgroup $B \subset G_{/\Kalg}$ with $X
  \in \Lie(B)$, where $\Kalg$ is an algebraic closure of $K$. It now
  follows from \cite{mcninch-rat}*{Prop. 27} that $B$ is a parabolic
  subgroup of $G$ [i.e. that $B$ is defined over $K$], and (b) follows.

  That a regular element is distinguished follows from the Bala-Carter
  theorem; it can be seen perhaps more directly just by observing that
  $B$ is a distinguished parabolic subgroup, so that an elment of the
  dense orbit of $B$ on $\Lie R_uB$ is distinguished by
  \cite{carter}*{5.8.7}.
  
  Finally, write $P = P(\phi)$. It follows from
  \cite{jantzen-nil}*{5.9} that $X$ is in the dense $P$-orbit on
  $\Lie(R_uP)$ and that $C_P(X) = C_G(X)$; thus $\dim \Ad(G)X = 2
  \dim R_uP$ so that indeed $P$ must be a Borel subgroup.
\end{proof}

  


Since $G$ is assumed to be quasisplit, we have
\begin{stmt}[\cite{mcninch-optimal}*{Theorem 54}]
  There is a regular nilpotent element $X \in \glie(K)$.
\end{stmt}
We fix now a regular nilpotent element $X$.   Let $C = C_G(X)$ be
the centralizer of $X$, and write $\zeta_G$ for the center of $G$.

\begin{stmt}
  \label{stmt:centralizer-description} 
  For the group $C = C_G(X)$ we have:
  \begin{itemize}
  \item[(a)] the maximal torus of $C$ is the identity component of
    the center $\zeta_G$ of $G$.
  \item[(b)] $C = \zeta_G \cdot R_u(C)$.
  \item[(c)]   $C$ is commutative.
  \end{itemize}
\end{stmt}

\begin{proof}
  Assertions (a) and (b) follow from \cite{jantzen-nil}*{\S4.10,
    \S4.13} precisely as in the proof of
  \ref{stmt:center-of-D-standard-is-smooth}.

  For (c), use a Springer isomorphism $\sigma:\NN \to \UU$, to see
  that $C$ is the centralizer of the regular unipotent element
  $u=\sigma(X)$.  Then the commutativity of $C$ follows from a result
  of Springer -- see \cite{humphreys-conjclasses}*{Theorem 1.14} --
  which implies that the centralizer of $u$ contains a commutative
  subgroup of dimension equal to the rank of $G$. This shows that the
  identity component of $C$ is commutative. Since $R_uC$ is connected
  and since $C = \zeta_G R_uC$, the group $C$ is itself commutative.
\end{proof}

We now fix a cocharacter $\phi$ of $(G,G)$ associated to $X$.
\begin{stmt}
  \label{stmt:regular-centralizer}
  The image $\phi$ normalizes
  $C$.  Suppose that the derived group of $G$ is quasisimple.  We have
  \begin{enumerate}
  \item[(a)]   \begin{equation*}
    \displaystyle \Lie(R_uC) = \bigoplus_{i = 1}^r
    \Lie(C)(\phi;2k_i)
  \end{equation*}
  where $1=k_1 \le \cdots \le k_r$ are the exponents of the Weyl group
  of $G$.
  \item[(b)] $\dim \Lie(R_uC)(\phi;2) = 1$.
  \end{enumerate}
\end{stmt}

\begin{proof}First suppose that $K$ has characteristic 0. In that
  case, the assertions are a consequence of results of \cite{kostant}.
  One deduces (a) immediately from \cite{kostant}*{\S6.7}.  For (b),
  one knows that the integers $2k_i$ are the highest weights for the
  action of a principal $\lie{sl}_2$ on $\glie$. Examining the roots
  of $\glie$, one knows that the largest weight $2k_r$ occurs
  precisely once; thus $\dim V(\phi;2k_r) = 1$.

  Now the duality of the exponents \cite{kostant}*{Theorem 6.7} shows
  that $$\dim V(\phi;2) = \dim V(\phi;2k_1) = \dim V(\phi;2k_{r}) =
  1$$ as required.

  For general $K$, consider a discrete valuation ring $\A$ whose
  residue field is $K$ and whose field of fractions $L$ has
  characteristic 0, and denote by $\GG$ a split reductive group scheme
  over $\A$ such that upon base change with $K$ one has $\GG_{/K} \simeq
  G$. Of course, the Weyl groups of $\GG_{/K}$ and of $\GG_{/L}$ are isomorphic.

  According to \cite{mcninch-centralizer}*{Theorems 5.4 and 5.7} we
  may find a suitable such $\A$ for which there is a nilpotent section
  $X_0 \in \Lie(\GG)(\A)$ and a homomorphism of $\A$-group schemes
  $\phi:\Gm \to \GG$ with the following properties:
  \begin{enumerate}
  \item[(i)] the image $X_0(K)$ of $X_0$ in $\glie = \Lie(G) =
    \Lie(\GG_{/K})$ coincides with $X$,
  \item[(ii)] the image $X_0(L)$ of $X_0$ in $\Lie(\GG_{/L})$ is
    regular nilpotent,
  \item[(iii)] the cocharacter $\phi_{/K}$ of $G=\GG_{/K}$ is associated
    to $X = X_0(K)$, and
    \item[(iv)] the cocharacter $\phi_{/L}$ of $\GG_{/L}$ is
  associated to $X_0(L)$.
  \end{enumerate}
  Moreover, it follows from \cite{mcninch-centralizer}*{Prop. 5.2}
  that the centralizer subgroup scheme $C_\GG(X_0)$ is smooth. In
  particular, $\Lie(C_\GG(X_0))$ is free as an $\A$-module, and
  $\Lie(C) = \Lie(C_\GG(X_0)) \tensor_\A K$. We may regard
  $\Lie(C_\GG(X_0))$ as a representation for the diagonalizable
  $\A$-group scheme $\Gm$ via $\Ad \circ \phi$.  Decompose this
  representation as a sum of its weight subspaces:
  \begin{equation*}
    \Lie(C_\GG(X_0)) = \bigoplus_{i \in \Z} \Lie(C_\GG(X_0))(\phi;i).
  \end{equation*}
  Extending scalars to $L$, one sees that $\Lie(C_\GG(X_0))(\phi;i)$
  is non-zero if and only if $i/2$ is one of the exponents of the Weyl
  group of $G$, and $\Lie(C_\GG(X_0))(\phi;2)$ has rank 1.
  The assertions (a) and (b) now follow by base change with $K$.
\end{proof}

\subsection{Lifting regular nilpotent elements}



\begin{stmt}
  \label{stmt:center-surj-sep}
  Let $f:G \to H$ be a homomorphism between reductive groups such that
  $f$ is surjective and central -- i.e. the subgroup scheme $\ker f$
  is contained in the center of $G$.  Then $f$ restricts to a
  surjective homomorphism $f_{\mid \zeta_G}:\zeta_G \to \zeta_H$.  
\end{stmt}

\begin{proof}
  The assertion is geometric, so we may and will suppose the field $K$
  to be algebraically closed.  Since $\ker f$ is central, the
  pre-image of each maximal torus $S$ of $H$ is a maximal torus $T$ of
  $G$. Then $f_{\mid T}:T \to S$ is surjective. The result now follows
  because $\zeta_G$ is the (scheme theoretic) intersection of all
  maximal tori in $G$ , and $\zeta_H$ is the intersection of all
  maximal tori in $H$; see \cite{SGA3}*{Exp. XII Prop.  4.10}.
\end{proof}

Suppose now that $G_1$ and $G_2$ are $D$-standard reductive groups,
and that $f:G_1 \to G_2$ is a separable surjective homomorphism of
reductive groups which is central, as before.  Recall that the
separability of $f$ means that the tangent mapping $df$ is surjective.
\begin{stmt}
  \label{stmt:can-lift-regular}
  \begin{enumerate}
  \item[(a)] Suppose that $X_2 \in \Lie(G_2)(K)$ is regular nilpotent.
    There is a nilpotent element $X_1 \in \Lie(G_1)(K)$ for which
    $df(X_1) = X_2$.
  \item[(b)] If $df(Y_1) = Y_2$ for nilpotent elements $Y_i \in \Lie(G_i)$,
    then $Y_1$ is regular if and only if $Y_2$ is regular.
  \end{enumerate}
\end{stmt}

\begin{proof}
  Let $B \subset G_2$ be a Borel subgroup with $X \in \Lie(B)(K)$.  The
  inverse image $B_1$ of $B$ in $G_1$ is a parabolic subgroup
  \cite{borel-LAG}*{22.6}; since $B_1$ is evidently 
  solvable, $B_1$ is a Borel subgroup of $G_1$. Thus $f$ induces a
  morphism $\tilde f:\mathcal{B}_1 = G_1/B_1 \to G_2/B$, and it is
  clear that the tangent map at the point $B_1$ of $\mathcal{B}_1$ is
  an isomorphism.  It follows from \cite{springer-LAG}*{Theorem
    5.3.2(iii)} that $\tilde f$ is an isomorphism between the flag
  varieties.

  Write $\ulie_1 = \Lie R_uB_1$ and $\ulie = \Lie R_uB$.  According to
  \cite{borel-LAG}*{22.5}, $f$ induces a bijection between the roots
  of $G_1$ (with respect to some maximal torus) and the roots of $G$
  (with respect to a compatible maximal torus).  In particular, $\dim
  R_uB_1 = \dim R_u B$. Since $\ker f$ is central in $G$, $\ker df$ is
  contained in $\Lie(T)$ for each maximal torus $T$. It follows that
  the restriction of $df$ to $\ulie_1$ is injective, so that
  $df(\ulie_1) = \ulie$.  Since $X \in \Lie(B)$ is nilpotent, we have
  $X \in \ulie$.  It follows that there is a -- necessarily nilpotent
  -- element $X_1 \in \ulie_1$ with $df(X_1) = X$.  This proves (a).

  Now, $\tilde f$ induces a bijection between the varieties
  $\mathcal{B}_{1,Y_1}$ and $\mathcal{B}_{2,Y_2}$, where
  $\mathcal{B}_{i,Y_i}$ consists of those Borel subgroups $B$ with $Y_i
  \in \Lie(B)$.  Assertion (b) now follows from
  \ref{stmt:characterize-regular-nilpotent}.
\end{proof}

\begin{stmt}
  \label{stmt:central-homomorphism-and-regular-centralizer}
  Suppose that the elements $X_i \in \Lie(G_i)$ are nilpotent for
  $i=1,2$, that $df(X_1) = X_2$, and that $X_1$ is regular,
  equivalently that $X_2$ is regular.  If $C_1 = C_{G_1}(X_1)$ and $C
  = C_{G_2}(X_2)$, then $C_1 = f^{-1}C$. In particular, $f$ restricts to
  a surjective separable mapping $f_{\mid C_1}:C_1 \to C$.
\end{stmt}

\begin{proof}
  As before, the assertion is geometric; thus we may and will suppose
  that $K$ is algebraically closed for the proof.  We only must argue
  that $(*) \quad C_1 = f^{-1}C$.  Indeed, the remaining assertions
  follow from $(*)$ by using \ref{stmt:inverse-image}(d) and the
  smoothness of $C_1$ \ref{stmt:smooth-centralizers}.

  We will argue that $f_{\mid C_1}:C_1 \to C$ is
  surjective; assertion $(*)$ will then follow since $\ker f$ is
  central in $G_1$.  Recall that $C_1 = \zeta_{G_1} \cdot R_uC_1$ and
  $C = \zeta_{G_2} \cdot R_uC$. The restriction $f_{\mid
    \zeta_{G_1}}:\zeta_{G_1} \to \zeta_{G_2}$ is
  surjective \ref{stmt:center-surj-sep}.  

  It remains to argue that $f_{\mid R_uC_1}$ yields a surjective
  mapping $R_uC_1 \to R_uC$.  Since $G_1$ and $G_2$ are $D$-standard,
  the centralizers $C_1$ and $C$ are smooth by
  \ref{stmt:smooth-centralizers}.  Thus the unipotent radicals of
  $C_1$ and of $C$ are smooth group schemes over $K$.  So the
  surjectivity of $f_{\mid R_uC_1}:R_uC_1 \to R_uC$ will follow if we
  only prove that $df:\Lie(R_uC_1) \to \Lie(R_uC)$ is surjective.


  But $df_{\mid \Lie R_uC_1}$ is injective since $\ker df$ is central.
  Moreover, $\dim R_uC_1$ is the semisimple rank of $G_1$, and $\dim
  R_uC$ is the semisimple rank of $G_2$.  Since $f$ is surjective with
  central kernel, the semisimple ranks of $G_1$ and $G_2$ coincide.
  Thus $df_{\mid \Lie R_uC_1}$ is bijective and the assertion follows.
\end{proof}

\subsection{The normalizer of $C$}
\label{sub:normalizer-of-C}
Let us again fix a regular nilpotent element $X$ together with a
cocharacter $\phi$ associated to $X$.  Let $N = N_G(C)$ be the
normalizer of $C$. 

We will argue in \ref{stmt:N-smooth} below that $N$ is a smooth group
scheme over $K$. Meanwhile, we consider in the next assertion the
$N$-orbit of $X$. Viewing this orbit as a subspace of $\Lie(R_uC)$, we
may consider its closure; that closure has a unique structure of
reduced subscheme \cite{liu}*{Prop. 2.4.2}. Since the orbit of $X$ is
open in its closure, that orbit inherits a structure as a reduced
subscheme.

The following argument essentially just records observations made by
Serre in his note found in \cite{mcninch-optimal}*{Appendix}.
\begin{stmt}
  \label{stmt:N-orbit-and-dimension}
  \begin{enumerate}
  \item[(a)]   The $N$-orbit of $X$ is the open subset of $\Lie(R_uC)$
    consisting of the regular elements; i.e.
    \begin{equation*}
      \Ad(N)X = \Lie(R_uC)_\reg
    \end{equation*}
  \item[(b)] The group $N/C$ is connected and has dimension equal to
    the semisimple rank $r$ of $G$.
  \item[(c)] In particular, $\dim N = 2r + \dim \zeta_G$.
  \end{enumerate}
\end{stmt}

\begin{proof}
  Before giving the proof, we recall that $(*)\ C = C^o \cdot \zeta_G$
  where $\zeta_G$ is the center of $G$; see
  \ref{stmt:centralizer-description}.

  For the proof of (a), we have evidently $\Ad(N)X \subset
  \Lie(R_uC)_\reg$. Since $\Ad(N)X$ is a reduced scheme, to prove
  equality it suffices to show that any closed point of
  $\Lie(R_uC)_\reg$ is contained in this orbit. If $\Kalg$ is an
  algebraic closure of $K$ and $Y \in \Lie(R_uC)_\reg(\Kalg)$, then
  $Y$ is a Richardson element for $B$, where $B$ is the Borel subgroup
  as in \ref{stmt:characterize-regular-nilpotent}.   Since the
  Richardson elements form a single orbit under $B$, there is $x \in
  B(\Kalg)$ for which $\Ad(x)Y = X$. Since $C$ is commutative, a
  dimension argument shows that $C_G^o(Y) = C^o$. Since also $C_G(Y) =
  C_G^o(Y) \cdot \zeta_G$; it follows from $(*)$ that $C = C_G(Y)$.
  Since
  \begin{equation*}
    xCx^{-1} = xC_G(Y)x^{-1} = C_G(\Ad(x)Y) = C_G(X) = C, 
  \end{equation*}
  one sees that $x \in N(\Kalg)$. It follows that $\Ad(N)X =
  \Lie(R_uC)_\reg$. 

  For (b), first suppose that $K = \Kalg$ is algebraically closed.  By
  (a), $(N/C)_\red$ identifies with $\Lie(R_uC)_\reg$, an open
  subvariety of the affine space $\Lie(R_uC)$.  It follows that
  $(N/C)_\red$ is an irreducible variety; thus the variety $N/C$ is
  connected.  

  But then relaxing the assumption on $K$, it follows that $N/C$ is
  connected in general.  Since $\Lie(R_uC)$ has dimension equal to
  $r$, conclude that $\dim N/C = r$.

  Finally, (c) follows since $\dim C = r + \dim \zeta_G$.
\end{proof}

We can now prove:

 \begin{stmt}
   \label{stmt:N-smooth}
   $N$ is a smooth subgroup scheme of $G$.
 \end{stmt}

 \begin{proof}
   The statement is geometric; thus we may and will suppose $K$ to be
   algebraically closed. Let $f:G_1 \to G_2$ be a surjective separable
   morphism with central kernel, and suppose that $G$ is one of the
   groups $G_1$ or $G_2$.

   If $G= G_1$, write $X_1$ for $X$ and set $X_2 = df(X_1)$.  If $G =
   G_2$, write $X_2$ for $X$ and use \ref{stmt:can-lift-regular} to
   find a regular nilpotent $X_1 \in \Lie(G_1)$ for which $df(X_1) =
   X_2$.

   Now write $C_i = C_{G_i}(X_i)$.  It follows from
   \ref{stmt:central-homomorphism-and-regular-centralizer} that $C_1 =
   f^{-1}C_2$, so we may apply \ref{stmt:smooth-normalizer} to see
   that 
   \begin{equation*}
     (*) \quad N_{G_1}(C_1) \ \text{is smooth over $K$ if and only if}\
   N_{G_2}(C_2) \ \text{is smooth over $K$}.
   \end{equation*}

   We are now going to argue: it suffices to prove the result
   when $G$ is quasisimple in very good characteristic.  
   
   Well, if the result is known for quasisimple $G$ in very good
   characteristic, it follows easily for any semisimple, simply
   connected group in very good characteristic (since any such is a
   direct product of simply connected quasisimple groups).  But any
   semisimple group in very good characteristic is separably isogenous
   to a simply connected one, so $(*)$ then permits us to deduce the
   result for any semisimple $G$ in very good characteristic.

   For a general $D$-standard group $G$, we must consider a reductive
   group $H$ of the form $H = H_1 \times T$ where $H_1$ is semisimple
   in very good characteristic, together with a diagonalizable
   subgroup scheme $D \subset H$. We suppose that $G$ is separable
   isogenous to $C_H(D)^o$.  The above arguments show that the desired
   result holds for $H$, and we want to deduce the result for $G$.
   Again using $(*)$, we may suppose that $G = C_H(D)^o$.

   But if $N = N_G(C)$, we see that $N = N_H(C_H(X))^D$. Our
   assumption means that $N_H(C_H(X))$ is smooth. But then
   \cite{SGA3}*{Exp.  XI, Cor. 5.3} shows that $N= N_H(C_H(X))^D$ is
   smooth, as required.

   Thus, we now suppose $G$ to be quasisimple in very good
   characteristic.  Now, $\dim N = 2r$ by
   \ref{stmt:N-orbit-and-dimension}, where $r$ is the  rank
   of $G$. Thus to show that $N$ is smooth, we must show that $2r =
   \dim \Lie(N)$.  Since one has always $\dim \Lie(N) \ge \dim N$, it
   is enough to argue that $\dim \Lie(N) \le 2r$.

   Write $\nlie = \{Y \in \glie \mid [Y,\Lie C] \subset \Lie C\}$ for
   the normalizer in $\glie$ of $\Lie(C)$.  Evidently $\Lie(N) \subset
   \nlie$; it therefore suffices to show that $\dim \nlie \le 2r$.
  
   Suppose that $Y \in \nlie$. Since $C$ is commutative, evidently
   $[[Y,X],X] = 0$, so that $Y \in \ker (\ad(X)^2)$. Thus, it suffices to
   show that 
   \begin{equation*}
     (*) \quad \dim \ker (\ad(X)^2) = 2r.
   \end{equation*}

   But in view of our assumptions on the characteristic of $K$, $(*)$
   follows from \cite{springer-IHES}*{Cor. 2.5 and Theorem 2.6}.
 \end{proof}

\begin{stmt} 
  \label{stmt:N-is-solvable}
  $N$ is a solvable group.
\end{stmt}

\begin{proof} 
  Let $B$ be the unique Borel subgroup of $G$ with $X \in \Lie(B)$ as
  in \ref{stmt:characterize-regular-nilpotent}. Since $B$ is solvable,
  the result will follow if we argue that $N \subset B$.

  Since $N$ is smooth -- in particular, reduced -- it suffices to
  argue that $B$ contains each closed point of $N$. Thus, it is enough
  to suppose that $K$ is algebraically closed and prove that $N(K)
  \subset B(K)$.

  Recall first that according to
  \ref{stmt:associated-cochar}(c), we have $C \subset B$.  If $y
  \in N(K)$ it follows that $\Int(y)B$ contains $C$, hence
  $\Lie(\Int(y)B)$ contains $X$.  This proves that $\Int(y)B = B$, so
  $y$ normalizes $B$. Since Borel subgroups are self normalizing, we
  deduce $N(K) \subset B(K)$, and the result follows.
\end{proof}

\begin{stmt}
  \label{stmt:max-torus-of-N}
  Write $S$ for the image of $\phi$ and write $\zeta_G^o$ for the connected
  center of $G$. Then $S \cdot \zeta_G^o$ is a maximal torus of $N$.
\end{stmt}

\begin{proof}
  Let $T \subset N$ be any maximal torus of $N$ containing $S$.  Since
  $T$ commutes with the image of $\phi$, it follows that the space
  $\Lie(C)(\phi;2)$ is stable under $T$. But that space is one
  dimensional \ref{stmt:regular-centralizer} and has $X$ as a basis
  vector; it follows that $X$ is a weight vector for $T$ so that $T$
  lies in the stabilizer in $G$ of the line $[X] \in \PP(\Lie(G))$.
  We know by \ref{stmt:centralizer-description} that $\zeta_G^o$ is a
  maximal torus of $C$; applying \cite{jantzen-nil}*{2.10 Lemma and
    Remark}, one deduces that $S \cdot \zeta_G^o$ is a maximal torus
  of that stabilizer, which completes the proof.
\end{proof}

Note that together \ref{stmt:N-orbit-and-dimension},
\ref{stmt:N-is-solvable}, and \ref{stmt:max-torus-of-N} yield
Theorem \ref{N-mod-C-description} from the introduction.

\begin{stmt}
  \label{stmt:N=SC}
  Consider the line $[X] \in \PP(\Lie(R_uC))$ and let $\OO$ be
  the $N$-orbit of $[X]$.
  \begin{enumerate}
  \item[(a)] The orbit mapping $(a \mapsto [\Ad(a)X]):N \to \mathcal{O}$ is smooth.
  \item[(b)] The stabilizer $\Stab_N([X])$ of $[X]$ in $N$ is smooth
    and is equal to $S \cdot C$.
  \item[(c)] The $N$-orbit  of $[X]$ is open and dense in
    $\PP(\Lie(R_uC))$.
  \end{enumerate}
\end{stmt}

\begin{proof}
  Recall that a mapping $f:X \to Y$ between smooth varieties over $K$
  is smooth if the tangent map $df_x$ is surjective for all closed
  points of $X$. If $X$ and $Y$ are homogeneous spaces for an
  algebraic group, it suffices to check that $df_x$ is surjective for
  one point $x$ of $X$.

  Moreover, it follows from \cite{springer-LAG}*{Prop. 12.1.2} that if
  an algebraic group $H$ acts on a variety $X$, and if $x \in X$ is a
  closed point, then the stabilizer $\Stab_H(x)$ is a smooth subgroup
  scheme if and only if the orbit mapping $H \to H.x$ determined by
  $x$ is a smooth morphism.

  Now, assertion (a) is the content of \cite{mcninch-rat}*{Lemma 23}
  As to (b), first note that the fact that the orbit mapping $N \to
  \OO$ is smooth shows that stabilizer $\Stab_N([X])$ is smooth over
  $K$. Now, according to \cite{jantzen-nil}*{2.10} the stabilizer in
  $G$ of the line $[X]$ is $S \cdot C$. Since $S \cdot C$ is a closed
  subgroup of $N$, the remaining assertion of (b) follows.

  For (c), notice that $\dim N/(S \cdot C) = \dim N/C - 1 = r -1$ by
  \ref{stmt:N-orbit-and-dimension}. Since we have also $\dim
  \PP(\Lie(R_uC)) = r-1$, it follows that the $N$-orbit of $[X]$ is
  open and dense in $\PP(\Lie(R_uC))$, as required.
  \footnote{Alternatively, one can argue as follows. Write $\LL$ for
    the tautological line bundle -- corresponding to the invertible
    sheaf $\mathcal{O}_{\PP(\Lie R_uC)}(-1)$ -- over $\PP(\Lie R_uC)$.
    Then $(\Lie R_uC) \setminus \{0\}$ identifies with the total space
    of $\LL$ with the zero-section removed. It follows that the
    natural mapping $(\Lie R_uC) \setminus \{0\} \to \PP(\Lie R_uC)$
    is flat and hence open.}
\end{proof}

Let us write $D = \Stab_N([X]) = S \cdot C$, and let $\mathbf{1}$ be
the closed point of $N/D$ determined by the trivial coset of $D$ in
$N$. From the adjoint action of the torus $S$ on $\Lie(N)$ one deduces
an action of $S$ on the tangent space $T_\mathbf{1} (N/D)$; thus one
may speak of the weight spaces $T_{\mathbf{1}}(N/D)(\phi;j)$ for $j
\in \Z$.
\begin{stmt}
  \label{stmt:weights-on-N/B}
  Assume that the derived group of $G$ is quasi-simple, and let the
  positive integers $k_1,k_2,\dots,k_r$ be as in \ref{sub:degrees and
    exponents}. Then we have the following:
  \begin{equation*}
    T_\mathbf{1} (N/D) = \bigoplus_{i=2}^r   T_\mathbf{1} (N/D)(\phi;2k_i - 2)
  \end{equation*}
\end{stmt}

\begin{proof}
  Let $\OO \subset \PP(\Lie R_uC)$ be the $N$-orbit of $[X]$.  By
  \ref{stmt:N=SC}(c), one knows that $\OO$ is an open subset of
  $\PP(\Lie(R_uC))$; in particular, $T_{[X]} \OO = T_{[X]}
  \PP(\Lie(R_uC))$. Also by \ref{stmt:N=SC}(c), one knows that the
  orbit mapping $\alpha:N \to \OO$ given by $\alpha(y) = [\Ad(y)X]$
  induces an $S$-equivariant isomorphism $\bar\alpha:N/D \to \OO$.
  Since $\bar\alpha(\mathbf{1}) = [X]$, the tangent map to $\bar
  \alpha$ at $\mathbf{1}$ yields an $S$-isomorphism between
  $T_{\mathbf{1}}(N/D)$ and $T_{[X]} \OO = T_{[X]} \PP(\Lie(R_uC))$.
  The assertion now follows from \ref{stmt:regular-centralizer} and
  the description of the $S$-module structure on the tangent space
  $T_{[X]} \PP(\Lie(R_uC))$ given in
  \ref{stmt:T-weights-on-tangent-to-P(V)}.  .
\end{proof}

We can now complete the proofs of Theorems \ref{N-mod-C-weights} and
\ref{theorem:RuN-split} from the introduction.
\begin{proof}[Proof of Theorem  \ref{N-mod-C-weights}]
  
  Consider the quotient morphism
  \begin{equation*}
    \Phi:N/C \to N/(S \cdot C) = N/D
  \end{equation*}
  and again write $\mathbf{1}$ for the closed point of $N/C$
  determined by the trivial coset, and $\mathbf{1}$ for the closed
  point of $N/D$ determined by the trivial coset.  Then 
  differentiating $\Phi$ gives an $S$-equivariant mapping
  \begin{equation*}
    d\Phi_{\mathbf{1}}:T_{\mathbf{1}}(N/C) \to T_{\mathbf{1}}(N/D).
  \end{equation*}
  Evidently the kernel of $d\Phi_{\mathbf{1}}$ is the image of
  $\Lie(S)$ in $T_{\mathbf{1}}((N/C)$. Regard $T_{\mathbf{1}}(N/C)$ as
  an $S$-module; by complete reducibility one can find an
  $S$-subrepresentation $V \subset T_{\mathbf{1}}(N/C)$ which is a
  complement to $\ker d\Phi_{\mathbf{1}}$.  Then evidently
  $d\Phi_{\mathbf{1}}$ yields an isomorphism between $V$ and
  $T_{\mathbf{1}}(N/D)$, and the assertion of Theorem
  \ref{N-mod-C-weights} follows.
\end{proof}

\begin{proof}[Proof of Theorem \ref{theorem:RuN-split}]
  We must argue that $R_uN$ is defined over $K$ and split.  Keep the
  preceding notations of this section; in particular, $S$ is the image
  of the cocharacter $\phi$ associated to the regular nilpotent
  element $X \in \Lie(G)$. According to Theorem
  \ref{sub:split-uni-radical}, it will suffice to show that $\Lie(S) =
  \Lie(N)^S$ and that each non-0 weight of $S$ on $\Lie(N)$ is
  positive. It suffices to prove these statements after extending
  scalars; thus we may and will suppose that $K$ is algebraically
  closed.

  If $G$ is any $D$-standard reductive group, we may find $D$-standard
  groups $M_1,\dots,M_d$ together with a homomorphism $\Phi:M \to G$
  where $M = \prod_{i=1}^d M_i$, satisfying (a)--(d) of
  \ref{stmt:D-standard-quasisimple-parts}.

  Using \ref{stmt:central-homomorphism-and-regular-centralizer} we may
  find a regular nilpotent element $X_1 \in \Lie(M)$ such that --
  writing $C_1 = C_M(X_1)$ -- the restriction $\Phi_{\mid C_1}:C_1 \to
  C=C_G(X)$ is surjective (and separable).  Moreover, we may choose a
  cocharacter $\phi_1:\Gm \to M$ associated with $X_1$ such that $\phi
  = \Phi \circ \phi_1$ is associated with $X$. Write $S_1 \subset M$
  for the image of $\phi_1$ and $S \subset G$ for the image of $\phi$.

  Now, by \ref{stmt:D-standard-quasisimple-parts}(a) each $M_i$ has
  quasisimple derived group. In the case where $M$ itself has
  quasisimple derived group -- i.e. if $M = M_1$ -- one uses
  \ref{stmt:regular-centralizer} and Theorem \ref{N-mod-C-weights} to
  deduce that 
  \begin{enumerate}
  \item[(i)] $\Lie(S_1) = \Lie(N_1)^{S_1}$, and
  \item[(ii)]  the non-zero
  weights of $S_1$ on $\Lie(N_1)$ are positive,
  \end{enumerate}
  where we have written $N_1 = N_M(C_1)$.
  Since in general $M$ is a direct product of reductive groups each
  having quasisimple derived group, one sees readily that (i) and (ii)
  hold for $M$.

  The normalizer $N_1 = N_M(C_1)$ is smooth by Theorem
  \ref{N-mod-C-description}. Since $\Phi$ is separable, it follows
  from \ref{stmt:smooth-normalizer} that $\Phi_{\mid N_1}:N_1 \to N$
  is surjective and separable -- i.e.  $d\Phi_{\mid N_1}:\Lie(N_1) \to
  \Lie(N)$ is surjective. Using the fact that (i) and (ii) hold
  together with the surjectivity of $d\Phi_{\mid N_1}$, one sees that
  $\Lie(S) = \Lie(N)^S$ and that the non-zero weights of $S$ on
  $\Lie(N)$ are positive, and the proof is complete.
\end{proof}

\subsection{The tangent map to a Springer isomorphism}
In this section, we give the proof of Theorem
\ref{dsigma-multiple-of-identity}.  Thus we suppose in this section
that the derived group of $G$ is quasisimple. We fix a Springer
isomorphism $\sigma:\NN \xrightarrow{\sim} \UU$, and we write $u =
\sigma(X)$ where $u \in G$ is regular unipotent and $X \in \glie$ is
regular nilpotent.

Since $\sigma$ is $G$-equivariant, one knows that $C = C_G(X) = C_G(u)$. 
\begin{stmt}
  The restriction of $\sigma$ to $\Lie R_uC$ determines an isomorphism
  $\gamma:\Lie R_uC \xrightarrow{\sim} R_uC$. In particular, the
  tangent mapping $d\gamma = (d\gamma)_0$ determines an isomorphism
  $d\gamma:\Lie R_uC \xrightarrow{\sim} \Lie R_uC$.
\end{stmt}
\begin{proof}
  Indeed, recall that $C$ is a smooth group scheme, and that $C =
  \zeta_G \cdot R_uC$ by \ref{stmt:centralizer-description}, so that
  $R_uC$ is the space of fixed points of $\Int(u)$ on $\UU$ and $\Lie
  R_uC$ is the space of fixed points of $\Ad(u)$ on $\NN$; the
  assertion is now immediate.
\end{proof}

Write $V = \Lie R_uC$.  Then $d\gamma$ is an endomorphism of $V$ as an
$N$-module, where $N$ is the normalizer in $G$ of $C$.
As in \S\ref{sub:normalizer-of-C}, we fix a cocharacter $\phi$
associated to $X$; write $S \subset N$ for the image of $\phi$.  We
now give the
\begin{proof}[Proof of Theorem \ref{dsigma-multiple-of-identity}.]
  For (1), note first that the mapping $\gamma$ is in particular an
  $S$-module endomorphism of $V$.  Since $\dim V(\phi;2) = 1$ by
  Theorem \ref{stmt:regular-centralizer}, one knows that $X$ spans
  $V(\phi;2)$.  It follows that $d\gamma(X) = \alpha X$ for some
  $\alpha \in K^\times$.

  If now $Y \in V_\reg = (\Lie R_u(C))_\reg$, there is an element $g
  \in N$ with $\Ad(g)X = Y$; cf. \ref{stmt:N-orbit-and-dimension}. Then
  \begin{equation*}
    d\gamma(Y) = d\gamma(\Ad(g)X) = \Ad(g)d\gamma(X) = 
    \alpha \Ad(g)X = \alpha Y.
  \end{equation*}
  It follows that  $d\gamma$ and $\alpha \cdot 1_V$ agree
  on the dense subset $(\Lie(R_uC))_\reg \subset \Lie(R_uC)$ so that
  indeed $d\gamma = \alpha \cdot 1_V$.

  For (2), recall that $B$ is a Borel subgroup of $G$ with unipotent
  radical $U$.  That $\sigma_{\mid \Lie U}$ is an isomorphism onto $U$
  follows from \cite{mcninch-optimal}*{Remark 10}.

  Now fix a Richardson element $X \in \Lie(U)(K)$; then $X$ is a
  regular nilpotent element of $\glie$, and part (1) shows that
  $d\sigma_{\mid \Lie U}(X) = \alpha X$ for some $\alpha \in
  K^\times$.  If $Y \in \Lie(U)(\Kalg)$ is a second Richardson
  element, then $Y = \Ad(g)X$ for $g \in B(\Kalg)$, and it is then
  clear by the equivariance of $d(\sigma_{\mid \Lie U})_0$ that
  $d(\sigma_{\mid \Lie U})_0(Y) = \alpha Y$. Since the Richardson
  elements are dense in $\Lie U$, the result follows.
\end{proof}

Note that Theorem \ref{dsigma-multiple-of-identity} need not hold when
the derived group of $G$ fails to be quasi-simple. Indeed, take for
$G$ the $D$-standard group $G = \GL_n \times \GL_m$ where $n,m \ge 2$.
Then $\glie = \lie{gl}_n \oplus \lie{gl}_m$, and the mapping
\begin{equation*}
  (X,Y) \mapsto (1 + \alpha X, 1 + \beta Y)
\end{equation*}
defines a Springer isomorphism $\sigma$ for any $\alpha,\beta \in
K^\times$.  If $X_0 \in \lie{gl}_n$ and $Y_0 \in \lie{gl}_m$ are
regular nilpotent, then $X = (X_0,Y_0) \in \glie$ is regular
nilpotent; the mapping $d\sigma$ has eigenvalues $\alpha$ and $\beta$
on $\Lie R_uC_G(X)$ and hence is not a multiple of the identity if
$\alpha \ne \beta$.



We finally conclude with an argument which gives an alternate proof of
(b) of Theorem \ref{smoothness-theorem} in case $G$ has quasi-simple
derived group. This argument does not rely on the fact that $Z(C_1)$
is smooth; on the other hand, in order to make sense of
$Z(C_1)_\red$, we are forced to assume $K$ to be perfect.

\begin{stmt}
  Let $K$ be perfect, let $X_1 \in \glie(K)$ be nilpotent, and let $C_1 =
  C_G(X_1)$ be its centralizer. Then the rule $t \mapsto \sigma(tX_1)$
  defines a mapping $\Phi:\Aff^1 \to Z(C_1)_\red$, and $X_1 = c \cdot
  d\Phi_0(1) \in \Lie(Z(C_1)_\red)$ for some $c \in K^\times$.
\end{stmt}

\begin{proof}
  Let $u = \sigma(X_1)$ and observe that $C_1 = C_G(u)$ by the
  $G$-equivariance of $\sigma$, so in particular, $u \in C_1$. Then
  for each $t \in \Aff^1$, and for each $g \in C_1$, we have
  \begin{equation*}
    g \cdot \sigma(tX_1)\cdot  g^{-1}  = \sigma(t \Ad(g)X_1 ) = \sigma(tX_1).
  \end{equation*}
  Since $\Aff^1$ is reduced, it follows that $\sigma(tX_1)$ indeed
  lies in $Z(C_1)_\red$.
 
  The formula for the tangent mapping of $\Phi$ is now immediate from
  Theorem \ref{dsigma-multiple-of-identity}.
\end{proof}


\newcommand\mylabel[1]{#1\hfil}

\begin{bibsection}

  \begin{biblist}[\renewcommand{\makelabel}{\mylabel} \resetbiblist{XXXXX}]

  \bib{borel-LAG}{book}{
      author = {Borel, Armand},
      title = {Linear Algebraic Groups},
      year = {1991},
      publisher = {Springer Verlag},
      volume = {126},
      series = {Grad. Texts in Math.},
      edition = {2nd ed.},
      label = {Bor 91}}

    \bib{borel-tits}{article}{
      author = {Borel, Armand},
      author = {Tits, Jacques},
      title = {Groupes R\'eductifs},
      journal = {Publ. Math. IHES},
      volume = {27},
      year = {1965},
      pages = {55--151},
      label = {BoT 65}}

    \bib{bourbaki}{book}{
      author={Bourbaki, Nicolas},
      title={Lie groups and Lie algebras. Chapters 4--6},
      series={Elements of Mathematics (Berlin)},
      note={Translated from the 1968 French original by Andrew Pressley},
      publisher={Springer-Verlag},
      place={Berlin},
      date={2002},
      label = {Bou 02}}

  \bib{carter}{book}{
    author = {Carter, Roger W.},
    title = {Finite groups of {L}ie type},
    publisher = {John Wiley \& Sons, Ltd},
    place = {Chichester},
    year={1993},
    note = {Reprint of the 1985 original},
    label = {Ca 93}}

    \bib{DG}{book}{
      author = {Demazure, M.},
      author = {Gabriel, P.},
      title = {Groupes Alg\'ebriques},
      publisher = {Masson/North-Holland},
      place = {Paris/Amsterdam},
      year = {1970}      ,
      label = {DG 70}      }


       \bib{SGA3}{book}{
       author = {Demazure, M.},
       author = {Grothendieck, A.},
       title  = {Sch\'emas en Groupes (SGA 3)},
       series = {S\'eminaire de G\'eometrie Alg\'ebrique du Bois Marie},
       year  = {1965},
       label = {SGA3}      }

    \bib{Humphreys-Coxeter}{book}{
      author = {Humphreys, James E.},
      title = {Reflection Groups and Coxeter Groups},
      series = {Cambridge Studies in Advanced Math.},
      publisher = {Cambridge Univ Press},
      volume = {29},
      year = {1990},
      label = {Hum 90}
      }

  \bib{humphreys-conjclasses}{book}{ 
    author={Humphreys, James~E.},
    title={Conjugacy classes in semisimple algebraic groups},
    series={Math. Surveys and Monographs}, 
    publisher={Amer. Math. Soc.}, 
    date={1995}, 
    volume={43},
    label={Hum 95}}

    \bib{JRAG}{book}{
      author={Jantzen, Jens Carsten},
      title={Representations of algebraic groups},
      series={Mathematical Surveys and Monographs},
      volume={107},
      edition={2},
      publisher={American Mathematical Society},
      place={Providence, RI},
      date={2003},
      pages={xiv+576},
      label = {Ja 03}
    }

    \bib{jantzen-nil}{incollection}{ 
      author={Jantzen, Jens~Carsten},
      booktitle = {Lie Theory: Lie Algebras and Representations},
      series = {Progress in Mathematics},
      publisher = {Birkh\"auser},
      editor = {Anker, J-P},
      editor = {Orsted, B},
      place = {Boston},
      volume = {228},
      title={Nilpotent orbits in representation theory}, 
      date = {2004},
      pages = {1\ndash211},
      label = {Ja 04}}

  \bib{kempf-instab}{article}{
    author={Kempf, George~R.},
     title={Instability in invariant theory},
      date={1978},
      ISSN={0003-486X},
   journal={Ann. of Math. (2)},
    volume={108},
    number={2},
     pages={299\ndash 316},
    label ={Ke 78} }

      \bib{kostant}{article}{
        author={Kostant, Bertram},
        title={The principal three-dimensional subgroup and the Betti numbers of
          a complex simple Lie group},
        journal={Amer. J. Math.},
        volume={81},
        date={1959},
        pages={973--1032},
        label = {Ko 59}      }

    \bib{KMRT}{book}{ 
      author={Knus, Max-Albert}, 
      author={Merkurjev,  Alexander}, 
      author={Rost, Markus}, 
      author={Tignol, Jean-Pierre},
      title={The book of involutions}, 
      series={Amer. Math. Soc. Colloq.  Publ.}, 
      publisher={Amer. Math. Soc.}, 
      date={1998}, 
      volume={44},
      label = {KMRT}  }

    \bib{liu}{book}{
      author = {Liu, Qing},
      title = {Algebraic geometry and arithmetic curves},
      note = {Translated from the French by Reinie Ern\'e},
      series={Oxford Graduate Texts in Mathematics},
      number = {6},
      publisher={Oxford University Press},
      year = {2002},
      label = {Li 02}}

    \bib{mcninch-testerman}{article}{
      author={McNinch, George~J.},
      author={Testerman, Donna~M.},
      title={Completely reducible $\operatorname{SL}(2)$-homomorphisms},
      journal = {Transact. AMS},
      volume = {359}
      year = {2008},
      pages = {4489\ndash 4510}
      label={MT 07}}

    \bib{mcninch-rat}{article}{
      author = {McNinch, George~J.},
      title = {Nilpotent orbits over ground fields of good characteristic},
      volume = {329},
      pages = {49\ndash 85},
      year = {2004},
      note={arXiv:math.RT/0209151},
      journal = {Math. Annalen},
      label={Mc 04}}

    \bib{mcninch-optimal}{article}{
      author={McNinch, George~J.},
      title = {Optimal $\operatorname{SL}(2)$-homomorphisms},
      date = {2005},
      volume = {80},
      pages = {391 \ndash 426},
      journal = {Comment. Math. Helv.},
      label = {Mc 05}}

    \bib{mcninch-centralizer}{article}{
      author  = {McNinch, George~J.},
      title = {The centralizer of a nilpotent section},
      journal = {to appear Nagoya Math J.},
      note  = {arXiv:math/0605626},
      label = {Mc 08}
    }

    \bib{mcninch-diag}{article}{
      author = {McNinch, George~J.},
      title = {The centralizer of a subgroup of multiplicative type},
      note = {in preparation},
      label = {Mc 08a}
      }
    
     \bib{premet}{article}{
       author={Premet, Alexander},
       title={Nilpotent orbits in good characteristic and the Kempf-Rousseau theory},
       note={Special issue celebrating the 80th birthday of Robert
         Steinberg.},
       journal={J. Algebra},
       volume={260},
       date={2003},
       number={1},
       pages={338\ndash 366},
       label = {Pr 03}}

    \bib{springer-LAG}{book}{ 
      author={Springer, Tonny~A.}, 
      title={Linear algebraic groups}, 
      edition={2}, 
      series={Progr. in Math.},
      publisher={Birkh{\"a}user}, address={Boston}, date={1998},
      volume={9}, 
      label={Sp 98}}

    \bib{springer-IHES}{article}{
      author={Springer, T. A.},
      title={Some arithmetical results on semi-simple Lie algebras},
      journal={Inst. Hautes \'Etudes Sci. Publ. Math.},
      number={30},
      date={1966},
      pages={115--141},
      label = {Spr 66}
    }

    \bib{springer-iso}{incollection}{
      author     ={Springer, Tonny~A.},
      title      ={The unipotent variety of a semi-simple group},
      date       ={1969},
      booktitle  ={Algebraic geometry (Internat. Colloq., Tata Inst. Fund. Res.,
        Bombay, 1968)},
      publisher  ={Oxford Univ. Press},
      address    ={London},
      pages      ={373\ndash 391},
      label      ={Spr69}}

    \bib{springer-steinberg}{incollection}{ 
      author={Springer, Tonny~A.},
      author={Steinberg, Robert}, 
      title={Conjugacy classes},
      date={1970}, 
      booktitle={Seminar on algebraic groups and related
        finite groups (The Institute for Advanced Study, Princeton,
        N.J., 1968/69)}, 
      publisher={Springer}, 
      address={Berlin},
      pages={167\ndash 266}, 
      note={Lecture Notes in Mathematics, Vol.  131}, 
      review={\MR{42 \#3091}}, 
      label = {SS 70}}

    \bib{steinberg-endomorphisms}{book}{
      author={Steinberg, Robert},
      title={Endomorphisms of linear algebraic groups},
      series={Memoirs of the American Mathematical Society, No. 80},
      publisher={American Mathematical Society},
      place={Providence, R.I.},
      date={1968},
      pages={108},
      label = {St 68}}


\end{biblist}
\end{bibsection}

\end{document}